  \DeclareMathAlphabet{\mathpzc}{OT1}{pzc}{m}{it}
\newcommand{\onem}{\mathbbm{1}}
\DeclareMathAlphabet{\mathpzc}{OT1}{pzc}{m}{n}
\def\Tgl{{\mathcal T}\!gl}
\renewcommand{\thefootnote}{$\fnsymbol{footnote}$}
\newtheorem{theorem}{Theorem}
\newtheorem{prop}[theorem]{Proposition}
\newtheorem{corr}[theorem]{Corollary}
\newtheorem{conj}[theorem]{Conjecture}
\newtheorem{defi}[theorem]{Definition}
\newtheorem{qes}[theorem]{Question}
\newtheorem{exe}[theorem]{Example}
\newtheorem{lemma}[theorem]{Lemma}
\def\blm{\begin{lemma}}
\def\elm{\end{lemma}}
\def\bdf{\begin{defi}}
\def\edf{\end{defi}}
\def\btm{\begin{theorem}}
\def\etm{\end{theorem}}
\def\bex{\begin{exe}}
\def\eex{\end{exe}}
\def\bpp{\begin{propos}}
\def\epp{\end{propos}}
\def\bq{\begin{qes}}
\def\eq{\end{qes}}
\def\ben{\begin{enumerate}}
\def\een{\end{enumerate}}
\def\eeq{\end{equation}}
\def\bda{\begin{displaymath}
\begin{array}{ccc}}
\def\eda{\end{array}\end{displaymath}}
\newcommand{\chapter}[1]{{\bf\Large #1}}
\newcommand{\txtqt}[2]
{\mbox{\raise .15em\hbox{$#1$}\!\big/\!\raise -.18em\hbox{$#2$}}}
\def\Rh{\mathbb Z[t^{\frac 12},t^{-\frac 12}]}
\def\QRh{\mathbb Q(t^{\frac 12},t^{-\frac 12})}
\newcommand{\str}[1]{\mathrm{Str}(#1)}
\newcommand{\pstr}[1]{\mathrm{pStr}(#1)}
\newcommand{\OhtsFct}{\mathcal V} 
\newcommand{\OFstr}[2]{\mathcal W_{#1,#2}}
\newcommand{\pOFstr}[2]{p\mathcal W_{#1,#2}}
\newcommand{\Burep}[1]{ {\mathcal B}_{#1}}
\newcommand{\LTWstr}[1]{ {\mathcal R}_{#1}}
\newcommand{\posar}[1]{\boldsymbol{\iota}^{#1}}
\newcommand{\spo}[1]{\overline{#1}}
\newcommand{\ndgop}{\boldsymbol{\theta}}
\newcommand{\dgop}{\theta}
\newcommand{\qtr}{\mathrm{{T\!r}}}
\newcommand{\Burmod}{M}
\newcommand{\lextalg}[1]{{\raise .4ex \hbox{\large $\bigwedge^{\mkern -1mu #1}$}}}
\newcommand{\extalg}[1]{{\raise .33ex \hbox{ $\mkern -4mu \bigwedge^{\mkern -5mu #1}\mkern -1mu$}}}
\newcommand{\sextalg}[1]{{\raise .33ex \hbox{\footnotesize $\bigwedge^{\mkern -1mu #1}$}}}
\newcommand{\sutr}{\mathrm{str}}
\newcommand{\STR}{\mathrm{Str}}
\newcommand{\BRT}[2]{\mathcal{Y}_{#1, #2}}
\newcommand{\Isom}{{\mathscr I}}
\newcommand{\wri}{\mbox{\large $\mathpzc w\mkern -2.6mu$}}
\newcommand{\swri}{\mbox{\small $\mathpzc w\mkern -2.6mu$}}
\newcommand{\eigv}{\mathbf v}
\newcommand{\eige}{\mathbf e}
\newcommand{\volel}[1]{\boldsymbol{\tau}^{#1}}
\newcommand{\covol}[1]{\raise 0.4ex \hbox{$\boldsymbol{\chi}$}_{#1}}
\newcommand{\grV}[2]{W_{#1, #2}}
\newcommand{\subs}[1]{\mathscr P_{#1}}
\newcommand{\eqp}{\mathbf{p}}
\begin{document}

\title{
Random Walk Invariants of String Links from R-Matrices} 

\author{Thomas Kerler  \ \ and \ \  Yilong Wang}

\address{Department of Mathematics, The Ohio State University\newline\indent Columbus, OH 43210, USA}
\email{kerler@math.ohio-state.edu} 

\address{Department of Mathematics, The Ohio State University\newline\indent Columbus, OH 43210, USA}
\email{wang.3003@math.ohio-state.edu} 

\begin{center}

\begin{abstract} We show that the exterior powers of the 
matrix valued random walk invariant of string links, introduced by Lin, Tian, and Wang, 
are isomorphic to the graded components of the tangle functor associated 
to the Alexander Polynomial by Ohtsuki divided by the zero graded invariant of the functor.
Several resulting  properties of these representations of the string link monoids
are discussed.
\end{abstract}

\end{center}

\maketitle 
\let\thefootnote\relax\footnotetext{
2010 Mathematics Subject Classification: Primary 57M27; Secondary 
57M25, 20F36, 57R56, 15A75,  17B37.}



\section{Introduction, Definitions, and Results}
 
\smallskip

\subsection{The Burau Representation and Tangles}

The aim of this article is to relate two generalizations of the 
Burau representation of the braid groups to certain types of tangles. Consider 
the standard presentation of the braid group in $n$ strands   
\begin{equation}\label{eq-BnPres}
B_n=\left\langle \sigma_i,\,i=1\ldots n-1{\Bigl |}\begin{array}{c}
\sigma_i\sigma_{i + 1}\sigma_i = \sigma_{i +1}\sigma_i\sigma_{i +1};\ \  i=1\ldots n-2\\
\sigma_i\sigma_j = \sigma_j\sigma_i; \ \ |i - j| \geq 2\;\\
\end{array}
\right\rangle\;.
\end{equation}

The {\em unreduced} Burau representation $B_n$ is defined on the free 
$\mathbb{Z}[t,t^{-1}]$-module of rank $n$, given by the homomorphism
\begin{equation}\label{eq-burauR}
\Burep n :\,B_n\rightarrow \mathrm{End}(\mathbb{Z}[t,t^{-1}]^n):\,\sigma_i\mapsto\Burep n (\sigma_i)=\beta_i\,.
\end{equation}

Here the Burau matrices $\beta_i$ and their inverses, denoting $\overline t=t^{-1}$, are defined as  
\begin{equation}\label{eq-burauM}
\beta_i=\onem_{i - 1}\oplus \left[\begin{array}{cc}
(1 - t)&1\\
t&0\\
\end{array}\right]\oplus \onem_{n - i - 1}
\quad \mbox{,}\quad 
\beta_i^{-1}=\onem_{i - 1}\oplus \left[\begin{array}{cc}
0 & \overline t\\
1&(1 - \overline t)\\ 
\end{array}\right]\oplus \onem_{n - i - 1}\,.
\end{equation}

\begin{wrapfigure}{R}{45mm} 
\begin{center}
\vspace*{-5mm}
\psfrag{n1}{\small $2$}   
\psfrag{n2}{\small $1$}   
\includegraphics[width=37mm]{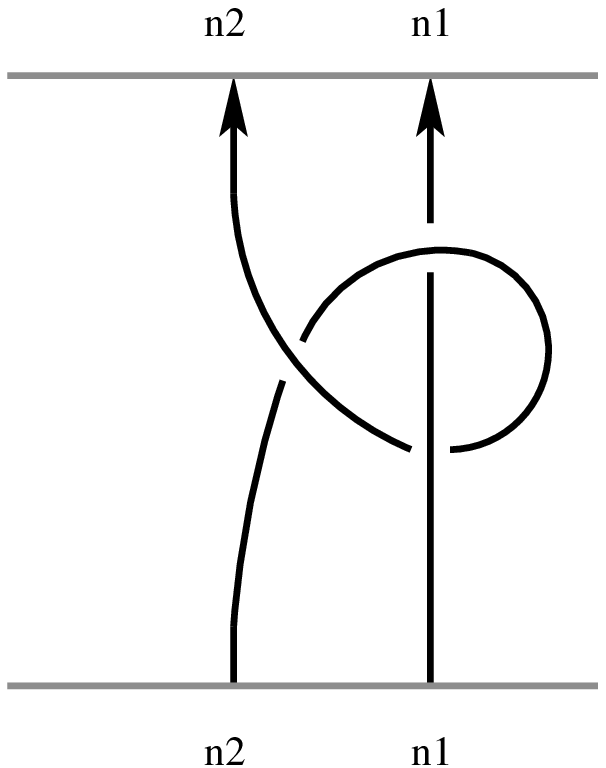}
\vspace*{1mm}
\caption{{\small $S:\posar 2\to \posar 2$}}
\label{fig:ex_strlk}
\vspace*{-3mm}
\end{center}
\end{wrapfigure}
There are various generalizations of the Burau representation from braids to tangles. 
To explain these recall first  the 
usual category of oriented tangles, denoted by $\Tgl$, whose objects are tuples of signs 
$\underline\epsilon=(\epsilon_1,\ldots,\epsilon_n)$ writing $|\underline\epsilon|=n$.
 A morphisms $T:\underline\epsilon\to \underline\delta$ is
an equivalence classes of oriented tangle diagrams in $\mathbb R\times[0,1]$ with endpoints 
$\{(j,0), j=1\ldots |\underline\epsilon|\}$
at the bottom of the diagram and $\{(j,1), j=1\ldots |\underline\delta|\}$ at the top
so that the orientation is upwards at the $j$-th positions if $\epsilon_j=+$ and downwards
if $\epsilon_j=-\,$. The equivalences are given
by isotopies and the usual Reidemeister moves.

We denote by $\posar n=(+,\ldots,+)$ the array of length $n$ with all + entries, which
implies that the orientations of all strands at this objects (either as source or target) 
must be pointing upwards.

A string link is a tangle class $T:\posar n\to \posar n$ with precisely $n$ interval components each of which 
has one endpoint $(i,0)$ at the bottom and the other endpoint $(j,1)$ at the top of the diagram.
See Figure~\ref{fig:ex_strlk} for an example of a string link $S:\posar 2\to \posar 2$ on two strands. 
String links  form a monoid $\str n$ with respect to the composition in $\Tgl$ so that  
we have the following inclusions of monoids:
\begin{equation}\label{eq-incls}
 B_n\;\subset\;\str n \;\subset\; \mathrm{End}_{\Tgl}(\posar n)\,.
\end{equation}

\subsection{Random Walk on String Link Diagrams}\label{s-randomw}

In \cite{LTW98} Lin, Tian, and Wang consider a  generalization of $\Burep n $ 
to $\str n$ which is inspired by a remark Vaughan Jones in his seminal paper \cite{Jo87},
where he offers a probabilistic interpretation of the Burau representation.
The description there is in terms of a bowling
ball that runs along an arc in a braid diagram following its orientation. At positive crossings 
the ball drops from an overcrossing strand to an undercrossing 
strand with  probability $(1-t)$ and remains on the overcrossing strand with probability $t$.

\begin{wrapfigure}{R}{38mm}
\begin{center}
\vspace*{-6mm}
\psfrag{x}{\small $i$}  
\psfrag{y}{ \small $(i+1)$}  
\psfrag{a}{\small $(1-t)$}  
\psfrag{b}{\small $t$}  
\includegraphics[width=27mm]{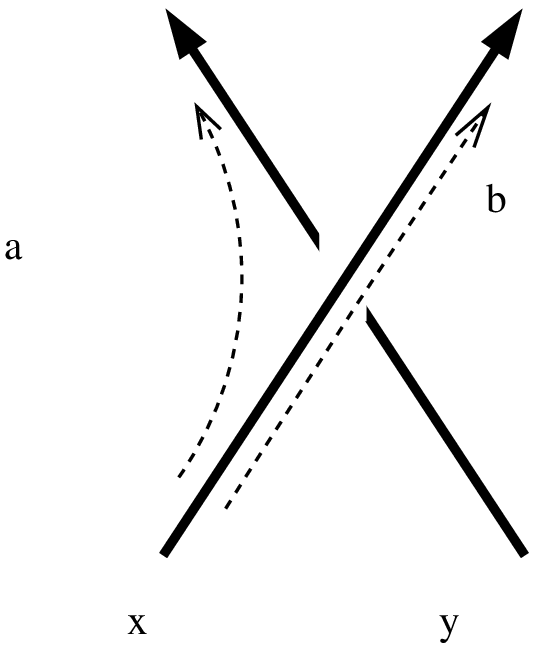}
\vspace*{1mm}
\caption{}\label{fig:cross_jump}
\vspace*{-4mm}
\end{center}
\end{wrapfigure}
The situation is illustrated in Figure~\ref{fig:cross_jump}. An analogous rule is used for 
negative crossings in which $t$ is replaced by $\overline t=t^{-1}$, see Figure~\ref{fig:X_opp_jump}, 
so that ``probabilities''
should rather be understood as weights whose values are allowed to be outside of the unit interval. 
Assuming the pictured strands are the  $i$-th and $(i+1)$-st 
strands in a braid presentation on $n$ strands the Markov transition matrix from the array of
probabilities just above the crossing to one just below is given by the unreduced
Burau matrices in (\ref{eq-burauM}). 

The construction in \cite{LTW98} extends the idea of weighted paths along an oriented
string link diagram with the same rule as in Figure~\ref{fig:cross_jump}. As opposed
to braid diagrams it is possible to encounter loops and thus infinite numbers of 
paths between two endpoints. It is shown that for $t$ sufficiently close to 1 the
resulting series of weights converge. For example,
the diagram in Figure~\ref{fig:ex_strlk} contains a loop of combined
weight $w=t(1-\overline t)$ and associated geometric series $\sum_{k=0}^{\infty}w^k=(2-t)^{-1}\,$.
The combined transition matrix in this example is given as
\begin{equation}\label{eq-LTWstr-exS}
 \LTWstr 2 (S)\,=\,
 \frac 1 {2-t}\left[\begin{array}{cc}
                     1 &  \overline t -1\\
                     1-t & 3-t-\overline t
                    \end{array}
\right]\,.
\end{equation}

It is shown in \cite{LTW98} that the resulting transition matrix will always have
rational functions in $t$ as entries. The functorial nature of the construction,
see Lemma~\ref{lm-basicsLTW} below, further
implies a homomorphism of monoids
\begin{equation}\label{eq-LTWhom}
 \LTWstr n \,:\;\str n \,\longrightarrow \, \mathrm{End}(\mathbb{Q}(t,t^{-1})^n)\;,
\end{equation}
which restricts to $\Burep n $ on the braid group $B_n$. The main result of this paper
will also extend to the exterior powers of this representation 
\begin{equation}\label{eq-extLTWhom}
\begin{split}
 \extalg{k}\LTWstr n \,& :\;\str n \,\longrightarrow \, \mathrm{End}(\extalg{k}\mathbb{Q}(t,t^{-1})^n)\\
 \mbox{given by }\qquad & \extalg{k}\LTWstr n (T).(x_1\wedge\ldots\wedge x_k)=(\LTWstr n (T)x_1)\wedge\ldots\wedge (\LTWstr n (T)x_k)\;.
\end{split} 
\end{equation} 

The construction of the random walk invariant in \cite{LTW98} has been applied to studies of
the Jones polynomial and its relation to the Alexander polynomial in \cite{LW01,GL05} 
and has been generalized in \cite{CT05,KLW01,SW01}.

\subsection{Functorial Invariants of Tangles}\label{ss-functgl}
A second generalization to the entire category of oriented 
tangles is given in Section 3.3 of Ohtsuki's book \cite{OH02} as operator invariants
a tangles associated to the Alexander polynomial. 
The matrices assigned to generating tangles there  have entries in the ring 
of Laurent polynomials in $t^{\pm\frac 12}$
for suitable basis. The construction
can thus be summarizes as a functor
\begin{equation}\label{eq-OhtsFct}
 \OhtsFct:\,\Tgl \;\longrightarrow\; \mathbb Z[t^{\frac 12},t^{-\frac 12}]-{\rm Fmod}
\end{equation}

from the tangle category to the category of free $\mathbb Z[t^{\frac 12},t^{-\frac 12}]$-modules. 
The assignment on objects is given by 
$\OhtsFct(\underline{\epsilon})=V^{\epsilon_1}\otimes\ldots \otimes V^{\epsilon_n}$
where $V^+=V$ is the free $\mathbb Z[t^{\frac 12},t^{-\frac 12}]$ module of rank 2
generated by elements $e_0$ and $e_1$, and $V^-=V^*$ is the dual module with
dual basis $\{e_0^*,e_1^*\}$. In fact,
$\OhtsFct$ is easily seen to be a tensor functor with respect to the tensor product
on $\Tgl$ defined by the usual juxtaposition.

As described in Section 4.5 of \cite{OH02} the tangle functor may be derived from
the representation theory of the quantum group $U_{-1}(\mathfrak{sl}_2)$ following 
 the methods originally introduced by Reshetikhin and Turaev \cite{RT90}. 
One implication
of this observation, which can also be checked directly, is that for a tangle $T$ 
the operator $\OhtsFct(T)$ preserves a natural grading on 
the the modules induced, for example, by $\deg(e_0)=\deg(e_0^*)=0$, $\deg(e_1)=1$, and $\deg(e_1^*)=-1$.
For example, the module assigned to the object of length $n$ with all positive orientations
decomposes, as free $\mathbb Z[t^{\frac 12},t^{-\frac 12}]$-modules, 
into its invariant graded components as follows:
\begin{equation}\label{eq-OFstrDec}
 \OhtsFct(\posar n)=V^{\otimes n}=\bigoplus_{j=0}^n\grV n j\qquad \mbox{with} \ \quad \dim(\grV n j)= {n \choose j} \;.
\end{equation}

A consequence of this decomposition is that the representation of $\str n$ on $\OhtsFct(\posar n)$
implied by the inclusion in (\ref{eq-incls}) yields a series of representations as follows:
\begin{equation}\label{eq-wdefLTW}
 \OFstr n j:\, \str n\,\longrightarrow\,\mathrm{End}(\grV n j)\,:\;T\,\mapsto\,\OhtsFct(T)|_{\grV n j}\,.
 \end{equation}

We note that $\grV n 0$ is of rank one so that we may write 
$\OFstr n 0(T)\in \Rh$ as a well-defined 
polynomial-valued invariant. We will see in Lemma~\ref{lm-t=1OF} that its specialization at $t=1$
is one for all string links so that $\OFstr n 0(T)\neq 0$ also as an element in $\Rh$. 
We can therefore define representations of the string monoid as follows.
\begin{equation}\label{eq-def10quot}
\OFstr n {k/0}:\;\str n\to \mathrm{End}(\QRh^n)\,:\; T\,\mapsto \,\frac 1 {\OFstr n 0(T)}\OFstr n k(T)\;.
\end{equation} 

It is shown in \cite{OH02} that the restriction of 
$n$-dimensional representation 
$\OFstr n 1:\,B_n\to \mathrm{GL}(\grV n 1)$ to the braid group is, up to 
a universal rescaling of generators,
equivalent to the unreduced Burau representation $\Burep n $ from (\ref{eq-burauR}).

An closely related approach of constructing tangles invariants associated to the
Burau representation and Alexander polynomial makes use of the quantum groups 
$U_{\zeta}(\mathfrak{gl}(1|1))$, see for example \cite{KS91,M91,V07}.

\subsection{Statement of Main Result}\label{ss-statement}

In view of the dominance of algebraically constructed functorial invariants it is
natural to ask whether the representation $\LTWstr n $ is really a special case of these.
Among the obstacles in an identification is the peculiar analytic flavor of the 
construction involving geometric series. Particularly, the role of the denominators 
that are obtained by summing these series is, as for example $(2-t)$ in 
(\ref{eq-LTWstr-exS}), are not obvious and their  algebraic or topological meaning 
are not at all immediate from the construction.

The main result of this article is to provide such algebraic interpretations in terms of an
equivalence of string link representations  as stated in the following theorem.

\begin{theorem}\label{thm-main}  The representations 
$\extalg{k}\LTWstr n $ and $\OFstr n {k/0}$ of the string link monoid $\str{n}$ are
isomorphic to each other.
\end{theorem}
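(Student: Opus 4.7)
The plan is to decompose the theorem into two intermediate statements. First, I would establish the internal exterior-power identity
$$\OFstr n {k/0}(T)\;=\;\extalg{k}\!\bigl(\OFstr n {1/0}(T)\bigr)\qquad\text{for every }T\in\str n,$$
viewed as endomorphisms of $\extalg{k}\grV n 1\cong\grV n k$ under the canonical identification that sends $w_{i_1}\wedge\cdots\wedge w_{i_k}$ (where $w_i\in\grV n 1$ has $e_1$ in the $i$-th slot and $e_0$ elsewhere) to the basis vector of $\grV n k$ carrying $e_1$ exactly at the positions $i_1<\cdots<i_k$. This is a super-algebraic fact rooted in the fermionic nature of $\OhtsFct$: via its equivalent construction from $U_{-1}(\mathfrak{sl}_2)$, or symmetrically from $U_q(\mathfrak{gl}(1|1))$, the odd vector $e_1$ behaves as a fermion and $\grV n k$ is literally its $k$-particle sector, which is canonically $\extalg{k}\grV n 1$. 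By tensor functoriality it suffices to check the identity on tangle generators; on a single crossing it reduces to the determinantal relation
$$\OFstr 2 2(\sigma)\cdot\OFstr 2 0(\sigma)\;=\;\det\!\bigl(\OFstr 2 1(\sigma)\bigr),$$
immediate from the explicit R-matrix, and the analogous local identities for cup and cap are similarly direct.

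Granted this reduction, it remains to show $\OFstr n {1/0}(T)=\LTWstr n(T)$ for every $T\in\str n$. Since both are monoid homomorphisms $\str n\to\mathrm{End}(\QR^n)$, it suffices to verify the equality on a generating family. I would take as generators the braid elements $\sigma_i^{\pm 1}$ together with elementary ``loop'' moves $\ell_i^{\pm}$ on single strands, built by sandwiching a cup--cap pair between two adjacent strands of a braid configuration. On braid generators the equality is Ohtsuki's identification of $\OFstr n 1|_{B_n}$ with the unreduced Burau representation $\Burep n$ up to the uniform rescaling scalar $\OFstr n 0(\sigma_i)$; this scalar cancels exactly upon passage to the ratio $\OFstr n {1/0}$, leaving precisely $\beta_i^{\pm 1}=\LTWstr n(\sigma_i^{\pm 1})$.

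The case of loop generators is the heart of the argument. The LTW definition at $\ell_i^{\pm}$ produces a geometric-series factor $(1-w_\ell)^{-1}$, where $w_\ell$ is the weight of one traversal of the loop. On the Ohtsuki side, the restriction of $\OhtsFct(\ell_i^{\pm})$ to the $0$-graded sector is a scalar that I expect to equal $(1-w_\ell)$ up to a universal normalization, while the $1$-graded matrix receives contributions from both the ``vacuum-loop'' configuration (the fermion bypasses the loop) and the ``single-traversal'' configuration (the fermion circulates once, Fermi statistics forbidding a second). Dividing the latter by $\OFstr n 0(\ell_i^{\pm})$ converts the algebraic quotient into precisely the Dyson-style geometric resummation that defines $\LTWstr n(\ell_i^{\pm})$.

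The principal obstacle is this last computation---pinning down exactly how the vacuum scalar $\OFstr n 0$ supplies the loop denominator. The cleanest execution I see is to recast both invariants as transfer matrices of a statistical-mechanical model on the tangle diagram: each arc carries a binary occupancy state, each crossing, cap, and cup contributes a local Boltzmann weight read off from the R-matrix, $\OhtsFct(T)$ is the full partition function, $\OFstr n 0(T)$ is its vacuum sector, and $\LTWstr n(T)$ manifests as the single-particle propagator relative to the vacuum background. With this dictionary in place the global identities at generators reduce to local ones, the exterior-power identity of the first step becomes transparent from the fermion statistics, and the LTW geometric series is revealed as the standard resummation over repeated visits to a loop by the single propagating fermion.
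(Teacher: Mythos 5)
Your overall architecture has two genuine gaps, and the second one sits exactly where the real difficulty of the theorem lives. First, the reduction of the identity $\OFstr n {k/0}(T)=\extalg{k}\bigl(\OFstr n {1/0}(T)\bigr)$ to ``tangle generators'' does not typecheck: the quotient $\OFstr n {k/0}$ is only defined for string links, because $\OFstr n 0(T)$ is a scalar only when source and target are both $\posar n$ and one restricts to the rank-one module $\grV n 0$. For a cup, cap, or a single crossing inside a larger diagram the $0$-graded component is not a scalar, so there is no local identity whose verification on generators would propagate to all of $\str n$; the statement you want to check is intrinsically global. Second, and more seriously, your verification of $\OFstr n {1/0}=\LTWstr n$ rests on the claim that $\str n$ is generated by the braid generators together with elementary ``loop moves'' $\ell_i^{\pm}$. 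No such presentation of the string link monoid is established (or known), and without it, checking two monoid homomorphisms on that family proves nothing about a general string link. You also concede that the central computation --- why the vacuum sector $\OFstr n 0$ produces exactly the geometric-series denominator of the random walk --- is ``the principal obstacle,'' and the statistical-mechanics dictionary you sketch is a heuristic, not an argument.

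The paper circumvents both problems with a different structural input: an Alexander-type theorem stating that every string link $T\in\str n$ is the partial closure of some braid $b\in B_{n+m}$. On braids, the identification of $\OhtsFct$ with $\extalg{*}\Burep{n+m}$ is Ohtsuki's Lemma~\ref{lm-xBRequiv}, so the entire content of the theorem is pushed into understanding what partial closure does on each side. On the random-walk side, Proposition~\ref{prop-blfrm} shows $\LTWstr n (T)=X+Y(\onem-Q)^{-1}Z$, the Schur complement of $\onem - Q$ in $\onem-\Burep{n}(b)$ written in block form, with the geometric series $\sum_r Q^r$ controlled by nilpotency of $\spo Q$ (Lemmas~\ref{lm-Qnil} and \ref{lm-Qest}). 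On the functorial side, the partial quantum trace is converted to a partial supertrace on the exterior algebra (Lemma~\ref{lm-traces}), which is then evaluated by the top-form and Schur-complement identities of Lemmas~\ref{lm-str=top}--\ref{lm-extschurd2}, yielding $\STR^{n+m}_n(\extalg{*}\Burep n(b))=\det(\onem-Q)\,\extalg{*}\LTWstr n(T)$ in one stroke. This single identity simultaneously identifies $\OFstr n 0(T)$ with $\det(\onem-Q)$ (your ``loop denominator'') and delivers all exterior powers at once, so the $k/0$ versus $\extalg k$ comparison you tried to establish separately falls out for free. If you want to salvage your approach, replacing the unproven generating-set claim with the braid-closure presentation and replacing the transfer-matrix heuristic with an explicit computation of the partial trace is essentially forced, and leads you back to the paper's route.
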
 
 
 We note that the isomorphism will consist only of rescaling or reordering of basis vectors. 
 In the case of $k=1$ we obtain a direct formula for the string link invariant:
 \begin{equation}\label{eq-01spcase}
  \Isom_n^{-1}\LTWstr{n}(T)\Isom_n=\frac 1 {\OFstr{n}{0}(T)} \OFstr{n}{1}(T)\;,
 \end{equation}
 where $\Isom_n$ is given as a bijection of canonical basis vectors up to multiplication with 
 units of $\Rh$. 
The denominator occurring in the random walk construction can thus also be  interpreted 
as the zero-graded part of Ohtsuki's functor. Additional interpretations are suggested in
Section~\ref{ss-comm}. 
  
 An immediate implication of Theorem~\ref{thm-main} is that $\LTWstr n $ is dominated by finite
 type invariants since it is dominated by $\OhtsFct$. This fact has been proved by more indirect 
 means in  \cite{LTW98}. Another 
 consequence of Theorem~\ref{thm-main} is the following.
 \begin{corr}\label{corr-main} Suppose $T\in\str{n}$.
  
  Then $\OFstr{n}{0}(T)^{k-1}$ divides all $k\times k$ minors of $\OFstr{n}{1}(T)$ in $\Rh$.  
 \end{corr}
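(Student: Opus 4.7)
The plan is to derive the corollary as an essentially formal consequence of Theorem~\ref{thm-main} together with the standard identity relating matrix entries of an exterior power to minors of the original matrix. The strategy is to compare the two expressions for the matrix coefficients of $\extalg{k}\LTWstr n (T)$ provided by the theorem.

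First, I would start from the case $k=1$ of the theorem recorded in~(\ref{eq-01spcase}), which says that $\LTWstr n (T)$ and $\frac{1}{\OFstr n 0 (T)}\OFstr n 1 (T)$ differ only by conjugation by the basis change $\Isom_n$. Then, taking $k$-th exterior powers and using the canonical basis $\{e_{i_1}\wedge\cdots\wedge e_{i_k}:i_1<\cdots<i_k\}$ on $\extalg{k}\bQ(t,t^{-1})^n$, the matrix entries of $\extalg{k}\LTWstr n (T)$ are (up to signs) the $k\times k$ minors of $\LTWstr n (T)$, which in turn equal $\OFstr n 0(T)^{-k}$ times the corresponding $k\times k$ minors of $\OFstr n 1(T)$.

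Next, I would invoke Theorem~\ref{thm-main} for the given $k$, which identifies $\extalg{k}\LTWstr n (T)$ with $\OFstr n {k/0}(T)=\OFstr n 0(T)^{-1}\OFstr n k(T)$ up to the promised basis isomorphism on $\extalg{k}\bQ(t,t^{-1})^n\cong \bQ(t,t^{-1})\otimes_{\Rh}\grV n k$. The key point I would emphasize is that, as stated after Theorem~\ref{thm-main}, this isomorphism merely permutes and rescales canonical basis vectors by units of $\Rh$; hence matrix coefficients on the two sides agree up to multiplication by units of $\Rh$. Combining the two expressions gives, for each pair of $k$-element index sets $I,J\subset\{1,\ldots,n\}$, an equality of the form
\begin{equation*}
 \frac{1}{\OFstr n 0 (T)^{k}}\,\det\bigl(\OFstr n 1(T)_{I,J}\bigr)\;=\;u_{I,J}\cdot\frac{1}{\OFstr n 0(T)}\,\bigl(\OFstr n k(T)\bigr)_{I',J'}
\end{equation*}
for some unit $u_{I,J}\in\Rh^{\times}$ and corresponding canonical basis indices $I',J'$ in $\grV n k$. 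Solving for the minor then yields
\begin{equation*}
\det\bigl(\OFstr n 1(T)_{I,J}\bigr)\;=\;u_{I,J}\cdot \OFstr n 0(T)^{k-1}\cdot\bigl(\OFstr n k(T)\bigr)_{I',J'}\,,
\end{equation*}
and since the entries of $\OFstr n k(T)$ lie in $\Rh$ by construction, this exhibits the required divisibility.

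The only delicate point is to confirm that the identification of bases at the $k$-th exterior level is indeed by $\Rh$-units, so that no spurious denominators enter when passing from $\bQ(t,t^{-1})$-equalities to $\Rh$-divisibility. This should follow immediately from the explicit form of $\Isom_n$ and the functoriality of $\extalg{k}$ applied to monomial matrices with unit entries, which preserve the property of being monomial with unit entries. I expect this bookkeeping to be the main (and only) obstacle; once it is verified, the corollary reduces to the observation already outlined.
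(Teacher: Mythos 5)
Your proposal is correct and follows essentially the same route as the paper: identify the $k\times k$ minors of $\OFstr{n}{1}(T)$ with the matrix entries of $\extalg{k}\OFstr{n}{1}(T)$, use the $k=1$ statement to rewrite these as $\OFstr{n}{0}(T)^{k}$ times the entries of $\extalg{k}\LTWstr{n}(T)$ (up to units), then use the degree-$k$ statement of Theorem~\ref{thm-main} to replace $\OFstr{n}{0}(T)\extalg{k}\LTWstr{n}(T)$ by $\OFstr{n}{k}(T)$, whose entries lie in $\Rh$. Your closing remark that the exterior power of a monomial matrix with unit entries is again monomial with unit entries correctly disposes of the only bookkeeping issue, which the paper likewise treats as immediate.
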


 \subsection{Overview of Paper}
 
The original random walk construction of $\LTWstr{n}$ \cite{LTW98} is reviewed and formalized in Section~\ref{s-rwi},
where it is also applied to the situation of a string link $T$ given as the closure of a braid $b$. Particularly, we obtain
in Proposition~\ref{prop-blfrm} an expression for  $\LTWstr{n}(T)$ in terms of blocks of the Burau matrix for $b$.
A consequence, stated in Corollary~\ref{corr-LTWeigen}, is that $\LTWstr{n}(T)$ admits an equilibrium state that is
independent of $T$ (and thus contains no information about $T$).

In Section~\ref{s-Rmat} we review Ohtsuki's construction of the tangle functor  $\OhtsFct$ as well as the
equivalence given in \cite{OH02} of the implied braid representation with the  exterior algebra 
extension of the unreduced Burau representation. The quantum trace, relevant to evaluating braid closures, is 
related to the natural supertrace on exterior algebras in Section~\ref{ss-TrRel}. In addition various 
grading and equivariance properties are discussed.

Section~\ref{ss-proof} contains the proofs for Theorem~\ref{thm-main} and Corollary~\ref{corr-main} after 
introducing several technical lemmas on relating traces, evaluations on top forms, and Schur complements of
block matrices in Sections~\ref{ss-strviatop} and \ref{ss-trschur}. Finally, we present additional points of view 
and possible further questions of study in Section~\ref{ss-comm}. 

\smallskip

\noindent
{\bf Acknowledgment:} The first author thanks Craig Jackson for discussions and calculations on an early 
version of the conjecture that are documented in \cite{J01}.


\section{Random Walk Invariants of Tangles} \label{s-rwi}

After a brief review of the random walk construction of \cite{LTW98} the main result of
this section is a formula for the representation $\LTWstr n $ in Proposition~\ref{prop-blfrm}  in terms
of block matrices of a Markov presentation of the string link. 

\subsection{Weighted Path Construction}\label{ss-xslin}

We will review here the construction of  \cite{LTW98}, formalizing the intuition in terms 
of random walks given in the introduction. A string link $T\in \str n$  is an oriented 
tangle $T:\posar n\to \posar n$ for which each component is an interval that 
starts at a bottom point $(i,0)$ and end at a top point
$(\pi_T(i),1)$ with $\pi_T\in S_n$.

An admissible path $P$ in a diagram of $T$ is a path following
the orientation of $T$ at each piece of the diagram. At a crossing the path must continue 
its direction on $T$ if it is along the undercrossing piece of the crossing. If the path 
approaches a the crossing along the overcrossing piece it may either continue in the same 
direction or continue on the undercrossing piece in the respective direction. In addition
an admissible path needs to start at a bottom point $(i,0)$ and end at a top point $(j,1)$.
Thus locally a path near a crossing may look like one of the dashed lines in 
Figure~\ref{fig:cross_jump} or \ref{fig:X_opp_jump}.

For braids all admissible paths need to travel upwards so that they will pass through 
each crossing at most once. However, for a string link, such as the one in Figure~\ref{fig:ex_strlk},
admissible paths may loop through a crossing arbitrarily often so that there are 
infinitely many admissible paths.

\begin{wrapfigure}{R}{35mm}
\begin{center}
\vspace*{-2mm}  
\psfrag{a}{\small $(1-\overline t)$}  
\psfrag{b}{\small $\overline t$}  
\includegraphics[width=22mm]{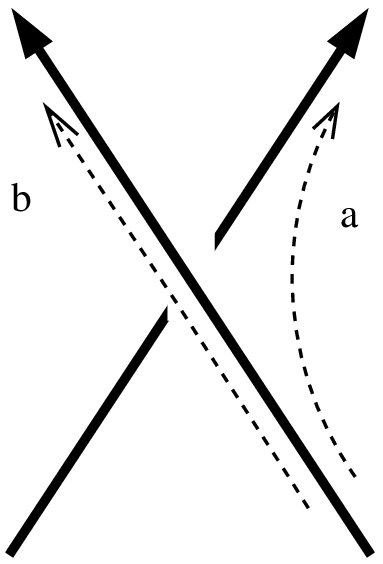}
\vspace*{1mm}
\caption{}\label{fig:X_opp_jump}
\vspace*{-2mm}
\end{center}
\end{wrapfigure}

To an admissible path $P$ that passes through $M$ crossings (counting repetitions) 
we associate a weight $w(P)=w_1\cdot\ldots\cdot w_M$, where $w_k=1$ if $P$ approaches
the $k$-th crossing along an undercrossing piece. If $P$ approaches the $k$-th crossing
along an overcrossing piece and the crossing is positive as in Figure~\ref{fig:cross_jump}
we set $w_k=t$ if $P$ is continuing in the same direction and $w_k=1-t$ if $P$ 
continues on the undercrossing piece. For a negative crossing as in Figure~\ref{fig:X_opp_jump}
we assign $w_k=\overline t=t^{-1}$ if $P$ is continuing in the same direction and 
$w_k=1-\overline t=1-t^{-1}$ otherwise.

We uses these to assign to the diagram of
a string link $T:\posar n\to \posar n$ an $n \times n$ matrix $\LTWstr n (T)$ 
whose $(i, j)$ entry is given by
\begin{equation}\label{eq-LTWij-paths}
\LTWstr n (T)_{j, i} = \displaystyle\sum_{P\in \mathcal P_i^{j}} w(P)\;\in\mathbb Q(t,t^{-1})\,.
\end{equation}
Here the summation is over the set $\mathcal P_i^{j}$ of
all admissible paths in a diagram of $T$ from the point $(i, 0)$ to $(j, 1)$.
The entry is 0 if there is no  such path. 

It is shown in \cite{LTW98} that for $t$ sufficiently close to 1 the summations (\ref{eq-LTWij-paths}) over all paths
converge indeed to rational expressions in $t$ and that these expressions are not dependent on the 
diagram chosen to present a particular $T$. 

\begin{lemma}\label{lm-basicsLTW} The assignment in (\ref{eq-LTWij-paths}) has the following basic properties:
 \begin{enumerate}
  \item The assignment $T\mapsto\LTWstr n (T)$ obeys $\LTWstr n (T)\LTWstr n (S)=\LTWstr n (T\circ S)$, where the composite
 $T\circ S$ in $\Tgl$ is given by stacking $T$ on top of $S$. 
 \item When restricted to the braid group $B_n$ the assignment reduces to the Burau representation as
 defined in (\ref{eq-burauR}).
 \item Specializing $\LTWstr n (T)$ to $t=1$ we  obtain the matrix of the permutation $\pi_T$ associated to $T$, that is,
 $\LTWstr n (T)_{j,i}=\delta_{j,\pi_T(i)}$.
 \end{enumerate}
 \end{lemma}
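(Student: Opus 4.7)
The three parts of the lemma are largely independent; all rely on the absolute convergence of the weighted path sum \eqref{eq-LTWij-paths} for $t$ sufficiently close to $1$, as established in \cite{LTW98}, which I take for granted throughout.

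For part (1), the plan is a horizontal slicing argument. Place $S$ in the strip $y \in [0, 1/2]$ and $T$ in the strip $y \in [1/2, 1]$, so that $T \circ S$ meets the slice $y = 1/2$ transversely in exactly the $n$ matching points $(k, 1/2)$, $k = 1, \ldots, n$. Since both source and target are $\posar n$, the orientation at each such matching point is upward. An admissible path $P$ in $T \circ S$ from $(i,0)$ to $(j,1)$ must exit $S$ by crossing the slice; once it has crossed into $T$ it cannot return to $S$, because the shared endpoints orient strictly upward and every crossing of the combined diagram lies in the interior of one of the two sub-diagrams. Hence $P$ crosses the slice exactly once, at a unique point $(k, 1/2)$, and therefore decomposes uniquely as a concatenation $P = P_T \cdot P_S$ with $P_S$ admissible in $S$ from $(i,0)$ to $(k,1)$ and $P_T$ admissible in $T$ from $(k,0)$ to $(j,1)$. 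The weight is multiplicative, $w(P) = w(P_T)\, w(P_S)$, so grouping by the intermediate index $k$ and rearranging the (absolutely convergent) series yields
\[
\LTWstr n (T \circ S)_{j,i} = \sum_{k=1}^{n} \LTWstr n (T)_{j,k}\, \LTWstr n (S)_{k,i},
\]
which is precisely the claimed matrix product.

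For part (2) the plan is to combine (1) with a direct computation on the generators $\sigma_i^{\pm 1}$ of $B_n$: once equality holds on generators, functoriality transports it to all of $B_n$. In a diagram consisting of a single positive crossing between strands $i$ and $i+1$, the admissible paths are exactly those drawn in Figure~\ref{fig:cross_jump} together with the trivial straight paths on the untouched strands, and reading off their weights reproduces the $2 \times 2$ block of $\beta_i$ in \eqref{eq-burauM}; Figure~\ref{fig:X_opp_jump} reproduces $\beta_i^{-1}$ analogously. For part (3), the key observation is that at $t = 1$ both jumping weights $(1-t)$ and $(1-\overline t)$ vanish, so only admissible paths that never jump at a crossing contribute to the specialization $\LTWstr n (T)|_{t=1}$. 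Such non-jumping paths simply follow one component of $T$ along its orientation. Since the component starting at $(i,0)$ is the unique interval terminating at $(\pi_T(i), 1)$, this yields exactly one surviving path from each $(i,0)$, with weight a product of factors of $t$ and $\overline t$ that all evaluate to $1$. Hence $\LTWstr n (T)_{j,i}|_{t=1} = \delta_{j,\pi_T(i)}$.

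The only conceptual difficulty lies in part (1): the rearrangement of the possibly infinite sums of path weights must be justified, which is precisely what the absolute convergence result from \cite{LTW98} provides. With this in hand, the remaining parts are direct enumerations, so no further obstacles are expected.
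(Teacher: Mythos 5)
Your proposal is correct and follows essentially the same route as the paper: decompose an admissible path in $T\circ S$ at the (upward-oriented) interface, use multiplicativity of weights for part (1); check generators and invoke functoriality for part (2); and observe that the jumping weights $(1-t)$, $(1-\overline t)$ vanish at $t=1$ for part (3). Your explicit attention to rearranging the absolutely convergent series is a point the paper leaves implicit, but it is not a different argument.
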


 \begin{proof} For the first part note that at the boundary between $T$ and $S$ in $T\circ S$ all orientations are
 upwards so that no admissible path can return from $T$ to $S$. Thus any admissible path $Q:i\to k$ in $T\circ S$ 
 is the composite of a path $P:i\to j$ in $S$ and a path $R:j\to k$ in $T$ for some $j$. Also the weight is clearly
 multiplicative $w(Q)=w(R)w(P)$. Summation over all $R$, $P$, and $j$ thus yields the respective matrix element
 for $\LTWstr n (T\circ S)$, which is thus the matrix product as desired.
 
 As described in the introduction the assignment coincides with the Burau representation on the generators so that
 by the previous the two repetitions coincide on all elements on $B_n$.
 
 For the third claim notice that all paths that change direction will have weight zero and may thus be discarded.
 For given indices the remaining path that run strictly 
 along a tangle component (preserving direction at every crossing) will have weight one. 
 \end{proof}

\subsection{Braid Closures and Burau Matrix Blocks}\label{ss-mbm}

Given a braid $b\in B_{n+m}$ thought of as an isomorphism on $\posar {n+m}$ 
we can construct a a tangle $T:\posar{n}\to\posar{n}\,$ by closing the last $m$ strands 
by loops as indicated in Figure~\ref{fig:Markov_Pres}. The following lemma
is a nearly straightforward generalization of Alexander's Theorem for links 
with some additional attention given to orientations at the end points.
A respective generalization of the Markov Theorem also holds but is not needed here
since we are only concerned with the comparison rather than the construction of invariants.
\begin{lemma}
 Every oriented tangle $T:\posar{n}\to\posar{n}\,$ is given as the partial closure of 
 a braid $b\in B_{n+m}$ for some $m$ (as in Figure~\ref{fig:Markov_Pres}).
\end{lemma}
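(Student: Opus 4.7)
The statement generalizes Alexander's braiding theorem from closed oriented links to tangles of the form $T:\posar n\to\posar n$. My first observation is that the boundary orientation conditions force every interval component of $T$ to run from a bottom endpoint to a top endpoint: a top-to-top or bottom-to-bottom arc would require two endpoints both pointing ``into'' the diagram or both ``out of'' it, contradicting the fact that an oriented $1$-manifold has one endpoint of each type. Consequently $T$ decomposes as a disjoint union of exactly $n$ through-strands and a (possibly empty) collection of closed components.

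The plan is to run Alexander's flip algorithm relative to the fixed boundary. Choose a generic diagram of $T$ in $\bR\times[0,1]$ and, by an isotopy rel $\partial T$, arrange that near each endpoint the corresponding through-strand is a short straight vertical segment. Pick a base point $*$ far to the right of the diagram and work in polar coordinates around $*$. Localizing each Alexander flip to a small disk disjoint from the fixed vertical segments, one isotopes every remaining arc so that its angular coordinate about $*$ is monotonically counterclockwise. Cutting along a ray from $*$ to infinity and unrolling then presents the diagram as a braid on $n+m$ strands for some $m$: the first $n$ strands are the preserved vertical rails from the endpoints, and the remaining $m$ strands come from the portions of the diagram whose unrolled endpoints get identified with one another under the cut. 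These identifications on the last $m$ strands are exactly the partial Markov closure depicted in Figure~\ref{fig:Markov_Pres}, and their effect on the braid reproduces all closed components and all ``recirculating'' portions of $T$.

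The main obstacle is the technical management of endpoints during Alexander's algorithm. One must verify that every flip can indeed be chosen inside a disk disjoint from the rails and from previously rectified arcs, and that the final unrolled picture reproduces $T$ up to ambient isotopy. This is a routine refinement of the classical argument; when needed, one may invoke auxiliary Markov-type stabilizations on the last $m$ strands to realign endpoints, since such moves do not alter the tangle obtained after partial closure. No substantive new ideas beyond the usual Alexander proof for closed links are required, as anticipated by the authors.
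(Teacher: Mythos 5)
Your argument is essentially the paper's: both run Alexander's trick relative to the fixed boundary (the paper follows Section~6.5 of Prasolov--Sossinsky, flipping the wrongly-oriented polygonal segments about a point $\mathcal O$ to the right of the diagram and then cutting and stretching the $m$ arcs crossing the auxiliary line $\mathcal L$), and both rest on the same key observation that the vertical segments at the $2n$ endpoints already wind in the correct sense about the base point and so are untouched by the algorithm. One small slip to fix: with the base point placed to the right at mid-height, the upward-oriented boundary rails wind \emph{clockwise}, so the remaining arcs must be made monotonically clockwise as well (not counterclockwise, as you wrote); otherwise the rails and the rectified arcs wind in opposite senses and the diagram does not unroll to a braid.
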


\begin{figure} 
\begin{center}
\vspace*{-1mm}  
\psfrag{D}{\huge $\ldots$}  
\psfrag{M}{\small $m$}  
\psfrag{N}{\small $n$}  
\psfrag{b}{\large $b$}  
\psfrag{T}{\large $T=$}  
\psfrag{L}{$\mathcal L$}  
\psfrag{p}{\footnotesize $\mathcal O$}  
\includegraphics[width=78mm]{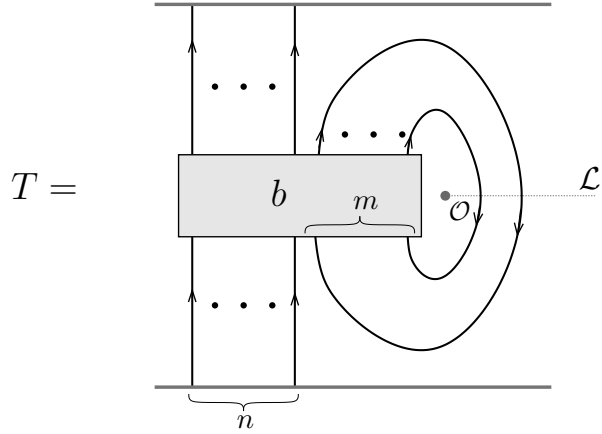}
\vspace*{-2mm}
\caption{Markov Presentation}\label{fig:Markov_Pres}
\vspace*{-1mm}
\end{center}
\end{figure}

\begin{proof}
 Most of the proof is nearly verbatim the same as, for example, the one 
 in Section~6.5 of \cite{PS97}.
 The diagram is assumed to be polygonal with vertical endpoints and a
 rotation point {\small $\mathcal O$} to the right of the diagram at mid height. 
 We may assume that no line segments are in radial direction from  {\small $\mathcal O$}
 and, using subdivision, that each segment has at most one crossing point.
 Thus each segment is either in clockwise or counter-clockwise direction around 
 {\small $\mathcal O$}. The {\em Alexander trick} is applied to every line segment
 in counter-clockwise direction as depicted in Figure~6.4 of  \cite{PS97}
 depending on the orientation of a possible crossing. As a result all segments will
 be in clockwise direction.
 
 Observe that the line segments at the top and bottom end points are already in
 clockwise direction so that they are not affected by the algorithm. In the 
 resulting tangle diagram we will have $m$ segments intersecting the horizontal
 line $\mathcal L$ indicated in Figure~\ref{fig:Markov_Pres}. Each of these
 segments can be arranged to be vertical and thus oriented downwards.  
 To these we apply the cut and stretch process described in Figure~6.3 of \cite{PS97}.
 As a result segments at all crossings are oriented upwards and to the left of
 {\small $\mathcal O$} so that we obtain the desired braid closure presentation.
\end{proof}

For a given tangle $T:\posar{n}\to\posar{n}$ that is the closure of a braid $b\in B_{n+m}$
in the above fashion consider the block form of the $(n+m)\times (n+m)$ Burau matrix 
associated to $b$ and its specialization at $t=1$.
\begin{equation}\label{eq-bclmat}
 \Burep n (b)=\left[\begin{array}{ccc}
X&&Y\\
&&\\
Z&&Q\\
\end{array}\right]
\qquad \mbox{ and } \qquad  
 \pi_b=\Burep n (b)|_{t=1}=\left[\begin{array}{ccc}
\spo{X}&&\spo{Y}\\
&&\\
\spo{Z}&&\spo{Q}\\
\end{array}\right]
\end{equation}
Here  $X$ is an $n\times n$-matrix and $Q$ is an $m\times m$-matrix, each with coefficients in $\mathbb Z[t,t^{-1}]$. 
The dimensions of the matrices $\spo{X}$, $\spo{Y}$, $\spo{Z}$, and  $\spo{Q}$ are the same but all matrix entries 
are either 0 or 1.

\begin{lemma}\label{lm-Qnil}
Suppose $T:\posar{n}\to\posar{n}$ is a tangle that is the closure of a braid $b\in B_{n+m}$ and
which has associated matrices as in (\ref{eq-bclmat}).

Then $T$ is a string link if and only if $\spo{Q}$ is nilpotent.
\end{lemma}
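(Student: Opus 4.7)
The plan is to translate the question about nilpotence of $\overline{Q}$ into a purely combinatorial question about the permutation $\pi_b\in S_{n+m}$, then match the combinatorics with the topological structure of the closure. Under the sign conventions, $\overline{Q}_{k,\ell}=1$ exactly when $\pi_b(n+\ell)=n+k$, and is $0$ otherwise. Thus $\overline{Q}$ encodes the partial map
\[
f\colon\{n+1,\ldots,n+m\}\to\{n+1,\ldots,n+m\}, \qquad f(n+\ell)=\pi_b(n+\ell)\ \text{when}\ \pi_b(n+\ell)>n,
\]
and undefined otherwise. A direct induction on $r$, using that columns of $\overline{Q}$ contain at most one $1$, yields $\overline{Q}^r_{k,\ell}\in\{0,1\}$, with value $1$ iff $\pi_b^s(n+\ell)\in\{n+1,\ldots,n+m\}$ for all $s=1,\ldots,r$ and $\pi_b^r(n+\ell)=n+k$. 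Consequently $\overline{Q}$ is nilpotent iff $f$ has no cycle, i.e., iff no orbit of $\pi_b$ is entirely contained in $\{n+1,\ldots,n+m\}$.

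Next I would identify $f$-cycles with closed loop components of $T$. Each component of $T$ is obtained by following a strand of $b$ upward and, whenever it reaches a top point in $\{n+1,\ldots,n+m\}$, continuing along the closure arc back to the corresponding bottom point in the braid. Hence the components are either intervals with both endpoints among $\{1,\ldots,n\}$ (at top and bottom) or circles formed from orbits of $\pi_b$ lying entirely in $\{n+1,\ldots,n+m\}$. Since $\pi_b$ is a bijection, no cycle of $\pi_b$ within $\{n+1,\ldots,n+m\}$ can have any point with a $\pi_b$-preimage outside that set; in particular, no strand starting at $i\leq n$ can fall into such a cycle, so following the strand from any $i\leq n$ always terminates at a top point in $\{1,\ldots,n\}$. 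This gives a bijection $\pi_T\in S_n$ and exactly $n$ interval components, with the remaining components being precisely the closed loops arising from $f$-cycles. Therefore $T$ is a string link iff it has no closed loops iff $f$ has no cycle.

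Combining the two steps, $T$ is a string link iff $\overline{Q}$ is nilpotent (with $\overline{Q}^m=0$ by Cayley--Hamilton in that case). The only real obstacle is bookkeeping: keeping the matrix-versus-permutation indexing consistent and verifying that orbits of $\pi_b$ truly split cleanly into those meeting $\{1,\ldots,n\}$ (giving intervals) and those confined to $\{n+1,\ldots,n+m\}$ (giving loops). Once the conventions are fixed, both directions reduce to the observation that for a $0/1$ matrix with at most one entry per column, nilpotence is equivalent to the absence of cycles in the associated functional digraph.
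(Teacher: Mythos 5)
Your proof is correct and follows essentially the same route as the paper: both arguments observe that $\spo{Q}$ is a sub-permutation matrix whose associated functional digraph has only interval and cycle components (the paper phrases this as a block decomposition into nilpotent Jordan blocks and cyclic blocks, you phrase it via powers of $\spo{Q}$ and orbits of $\pi_b$), and both then identify cycles confined to $\{n+1,\ldots,n+m\}$ with closed components of the closure $T$. The bookkeeping you flag is exactly what the paper's proof spells out, so no gap remains.
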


\begin{proof} We note first that, since $\spo{Q}$  is the block of a permutation matrix $\pi_b$, 
it has at most one 1 in each row and each column and 0's for all other entries. That is, it is the 
incidence matrix of an oriented graph $I_Q$ with each vertex having at most on incoming and one
outgoing edge. Clearly the components of such a graph are either  oriented intervals or oriented circles.
Thus, after reordering of the basis, the matrix can be brought into a block diagonal form 
with two types of blocks.
The first, corresponding to interval components of $I_Q$, are nilpotent Jordan blocks given by
square matrices $N$ with $N_{i,j}=\delta_{j,i+1}$. The second type for the circle components 
are cyclic $k\times k$-matrices $C$ with $C_{i,j}=1$ if $j\equiv i+1 \mod k$ and $C_{i,j}=0$
otherwise. Hence $\spo{Q}$ is nilpotent if an only if there are no cyclic blocks.

Consider first a matrix block of the first nilpotent type $N$. Denoting canonical basis 
$\{e_j:j=1,\ldots, n\}$ we have for some $p$ and $q$ with $n<p<q\leq n+m$ that
$\pi_be_s=Ne_s=e_{s-1}$ for $s=p+1,\ldots ,q$. Since $\pi_b$ encodes which points at the 
bottom of $b$ are connect by strand to which points at the top of $b$ 
we find  that the arcs at positions $p$ through $q$ are consecutively connected to each other
by intervals in $b$. They thus form one interval $J$ in $T$ which is starting at the $q$-th position 
at the top of $b$ and ending at the $p$-th position at the bottom of $b$ and intersecting
$\mathcal L$ in $q-p+1$ arcs. Since $N$ and thus $\spo{Q}$ has only 0 entries
in the $p$-th column we must have a 1 entry in the $p$-th column for $\spo{Y}$ in some 
$k$-th row (with $k\leq n$) which means $J$ is connected to the $k$-th top point of the diagram
for $T$. Similarly, since $N$ and thus $\spo{Q}$ has only 0 entries in the $q$-th row 
$\spo{Z}$ must have a 1 entry in the $q$-th row in some $l$-th column so that $J$ must be
connected to the $l$-th start point at the bottom of the tangle diagram.

Thus if all matrix blocks of $\spo{Q}$ are of nilpotent type the components of all closing
arcs are connected to top and end bottom points of the tangle diagram. Components of $T$
that are disjoint from closing arcs always oriented upwards and thus 
also must connect to top and bottom points of the diagram. Thus of $\spo{Q}$ is nilpotent
$T$ is indeed a string link.

Conversely, suppose $\spo{Q}$ contains a cyclic block with $\pi_be_s=Ce_s=e_{s-1}$ 
for $s=p+1,\ldots ,q$ and $Ce_p=e_q$. Then the arcs connected at positions $p$ through $q$
are again consecutively connected by intervals of $b$ into one component but now the ends 
of the component are also connected by an interval in $b$ forming a closed component of $T$.
However, closed components are not allowed for a string link. 
\end{proof}

\subsection{Block Matrix Formula for String Links}\label{ss-blf}

The following consequence of nilpotency
will be useful to control geometric series
occurring in the random walk picture.

\begin{lemma}\label{lm-Qest}
 Suppose $Q$ is an $m\times m$-matrix that depends continuously on a real (or complex) parameter $t$ in
 a vicinity of $t=1$ Assume also that the
 specialization $\spo{Q}$ at $t=1$ is nilpotent. 
 Then for any $d>0$ there exist a $\varepsilon>0$ and a $C>0$ such that for all $t$ with $|t-1|<\varepsilon$
 and all integers $N>0$ we have 
 \begin{equation}
  \|Q^N\| \leq C d^N\;.
 \end{equation}
\end{lemma}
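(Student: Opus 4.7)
The plan is to exploit nilpotency of $\spo{Q}$ together with continuity of $Q$ in $t$ to reduce the estimate to bounding a finite number of low powers of $Q$.

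First, since $\spo{Q}$ is an $m \times m$ nilpotent matrix, one has $\spo{Q}^{m} = 0$. The map $t \mapsto Q(t)^{m}$ is continuous and vanishes at $t=1$, so for any $\eta > 0$ there exists $\varepsilon_{1} > 0$ with $\|Q^{m}\| \leq \eta$ whenever $|t-1| < \varepsilon_{1}$. Given the target $d > 0$, I would choose $\eta := d^{m}$ and take $\varepsilon_{1}$ accordingly. Separately, continuity of $Q$ itself at $t=1$ yields an $\varepsilon_{2} > 0$ and a constant $K$ (for instance $K := \|\spo{Q}\| + 1$) with $\|Q\| \leq K$ on $|t-1| < \varepsilon_{2}$, hence $\|Q^{r}\| \leq K^{m-1}$ for every $0 \leq r < m$. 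I then set $\varepsilon := \min(\varepsilon_{1},\varepsilon_{2})$.

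Second, for each $N > 0$ I would write $N = km + r$ uniquely with $0 \leq r < m$, and submultiplicativity of the norm gives
\[
\|Q^{N}\| \;\leq\; \|Q^{m}\|^{k}\, \|Q^{r}\| \;\leq\; d^{mk}\, K^{m-1} \;=\; K^{m-1}\, d^{-r}\, d^{N}.
\]
Taking $C := K^{m-1} \cdot \max(1, d^{-(m-1)})$ absorbs the finite factor $d^{-r}$ uniformly over $r \in \{0,\dots,m-1\}$, yielding the desired bound $\|Q^{N}\| \leq C\, d^{N}$ valid for all $N$.

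The only delicate point, and the one worth naming as an obstacle, is the order of quantifiers: the neighborhood $\varepsilon$ depends on $d$, because the tolerance $\eta = d^{m}$ requested from the continuity of $Q^{m}$ at $t=1$ shrinks with $d$. Once this dependence is correctly set up, the Euclidean division $N = km + r$ is what promotes the single estimate $\|Q^{m}\| \leq d^{m}$ into a geometric bound uniform in $N$, the leftover power $Q^{r}$ contributing only a bounded prefactor.
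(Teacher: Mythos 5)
Your proof is correct and follows essentially the same route as the paper: use $\spo{Q}^m=0$ and continuity to get $\|Q^m\|\leq d^m$ near $t=1$, bound $\|Q\|$ on a neighborhood, and promote these to all $N$ via the division $N=km+r$. The only cosmetic difference is in how the leftover factor from $Q^r$ is absorbed into $C$ (the paper picks the bound $M>d$ so that $(M/d)^r\leq(M/d)^{m-1}$, while you use $\max(1,d^{-(m-1)})$), which changes nothing of substance.
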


\begin{proof} Assume that $Q$ is continuous for $t$ with $|t-1|\leq \varepsilon_1$ and let 
$M>d$ be an upper bound of $\|Q\|$ for these $t$. Now since $\spo{Q}^m=0$ by assumption we have 
that $\|Q^m\|$ is a continuous function vanishing at 0 so that there is an $\varepsilon\in (0,\varepsilon_1)$
such that $\|Q^m\|<d^m$ whenever $|t-1|<\varepsilon$. Writing $N=cm+r$ with $r=0,\ldots, m-1$ we thus 
have $\|Q^N\|=\|Q^r Q^{mc}\|\leq \|Q\|^r\|Q^m\|^c\leq M^rd^{mc}=(\frac M d )^rd^N\leq Cd^N$, where
$C=(\frac M d )^{m-1}$. 
\end{proof}

 In the proposition below we  establish a relation between the blocks of the Burau representation
 of a braid and the random walk invariant of the string link obtained by closing the same braid.

 \begin{prop} \label{prop-blfrm}
 Suppose $T \in \str{n}$ is a string link obtained as the closure of a braid $b\in B_{n+m}$
 and let $X$, $Y$, $Z$, and $Q$ be the matrix blocks of $\Burep n (b)$ as in (\ref{eq-bclmat}). Then $(\onem -Q)$ is
 invertible over $\mathbb Q(t,t^{-1})$ and we have 
 \begin{equation}\label{eq-blfrm}
  \LTWstr n (T) = X+Y(\onem -Q)^{-1}Z\;.
 \end{equation}
 \end{prop}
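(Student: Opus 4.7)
The plan is to first establish invertibility of $(\onem - Q)$, then verify identity (\ref{eq-blfrm}) by expanding $\LTWstr{n}(T)_{j,i}$ as a sum over admissible paths, classified according to the number of closure-arc traversals.

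For invertibility: Lemma~\ref{lm-Qnil} tells us that $\spo{Q}$ is nilpotent, so $(\onem - \spo{Q})$ is unipotent with determinant $1$. Hence $\det(\onem - Q) \in \mathbb{Z}[t, t^{-1}]$ specializes to $1$ at $t = 1$, is in particular nonzero, and $(\onem - Q)$ is invertible over $\mathbb{Q}(t, t^{-1})$.

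For the formula itself, I would classify each admissible path $P$ in the braid closure $T$ from $(i, 0)$ to $(j, 1)$ (with $1 \leq i, j \leq n$) by the number $r \geq 0$ of closure arcs of $\mathcal{L}$ that $P$ traverses. Since the closure arcs carry no crossings (hence weight $1$) and each connects top position $n+k$ of $b$ back to bottom position $n+k$ of $b$, every admissible $P$ factors uniquely as a concatenation of $r + 1$ admissible subpaths through $b$ separated by closure-arc traversals. For $r \geq 1$, the first subpath goes from bottom $i \leq n$ to some top $n + k_1$ in $b$, the $s$-th intermediate subpath from bottom $n + k_s$ to top $n + k_{s+1}$, and the last from bottom $n + k_r$ to top $j \leq n$. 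By multiplicativity of weights and the definition of the blocks $X, Y, Z, Q$ as subsums of the entries of $\Burep{n+m}(b)$, summing over $k_1, \ldots, k_r$ identifies the total weight of such paths with exactly $r \geq 1$ closure traversals as $(Y Q^{r-1} Z)_{j, i}$. Paths with $r = 0$ remain entirely in $b$ with endpoints in $[1, n]$ and contribute $X_{j, i}$.

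Summing over $r \geq 0$ then yields formally
\[
\LTWstr{n}(T)_{j,i} \;=\; X_{j,i} + \Bigl(Y\Bigl(\sum_{r \geq 0} Q^r\Bigr) Z\Bigr)_{j,i}.
\]
To justify the infinite sum, I would invoke Lemma~\ref{lm-Qest} applied to $Q$ with, say, $d = \tfrac{1}{2}$: by nilpotency of $\spo{Q}$, there is a punctured neighborhood of $t = 1$ on which $\|Q^N\| \leq C \, 2^{-N}$, so $\sum_{r \geq 0} Q^r$ converges absolutely to $(\onem - Q)^{-1}$. On this neighborhood the path sum defining $\LTWstr{n}(T)$ \cite{LTW98} therefore equals $X + Y(\onem - Q)^{-1} Z$. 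Both sides are rational functions of $t$, so agreement on an open set implies equality in $\mathbb{Q}(t, t^{-1})$. The main obstacle is exactly this justification of convergence and the interchange of path-summation with geometric summation in $r$; Lemma~\ref{lm-Qest} is tailored to supply the geometric bound ensuring that the aggregate weight of paths with many loops decays fast enough for the rearrangement to be valid.
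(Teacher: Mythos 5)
Your proposal is correct and follows essentially the same route as the paper's proof: invertibility via nilpotency of $\spo{Q}$ at $t=1$, decomposition of admissible paths by the number of closure-arc traversals, identification of the resulting weight sums with the block products $X_{j,i}$ and $(YQ^{r-1}Z)_{j,i}$, and convergence of the geometric series via Lemma~\ref{lm-Qest} followed by the rational-function identity argument. The only cosmetic difference is that the paper records the intermediate bijection $\mathcal P_i^{j}(i_1\ldots i_k)\cong \mathcal M_{i_{k}}^{j}\times\cdots\times \mathcal M_i^{i_1}$ explicitly before summing over intermediate indices.
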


 \begin{proof}
  We note first that for $D(t)=\det(\onem -Q)$ we have $D(1)=1$ since $\spo{Q}$
  is nilpotent so that $D(t)\neq 0$ and  $(\onem -Q)$ is indeed
 invertible over $\mathbb Q(t,t^{-1})$. Thus the expression on the right side of (\ref{eq-blfrm})
 is always well defined.
 
 In order to evaluate $\LTWstr n (T)_{ji}$ we partition the set of admissible paths 
 $\mathcal P_i^{j}$ further. Note that every path $P\in \mathcal P_i^{j}$ is characterized by how
 often and in which order it will pass through the $m$ closing arcs attached to positions 
 $n+1$ through $n+m$ of $b$ as in Figure~\ref{fig:Markov_Pres}. Denote by $\mathcal P_i^{j}(i_1\ldots i_k)$
the set of admissible paths that pass through arcs in positions $i_1, \ldots, i_k\in\{n+1,\ldots,n+m\}$
in this order (and with repetitions allowed). Denote also by  $\mathcal M^{t}_s$ the set of admissible paths in $b$ that start at the
$s$-th position at the bottom of $b$ and end at the $t$-th position at the  top of $b$. 
Then it is clear that each path in the former is put together in a unique fashion by pieces from
the latter sets, yielding a natural bijection as follows.
 \begin{equation}
  \mathcal P_i^{j}(i_1\ldots i_k)\cong 
 \mathcal M_{i_{k}}^{j}
 \times \mathcal M_{i_{k-1}}^{i_k}
 \ldots
 \times \mathcal M_{i_1}^{i_2}
 \times \mathcal M_i^{i_1}\;.
 \end{equation}
 Special cases are $\mathcal P_i^{j}(i_1)=\mathcal M_{i_{1}}^{j}\times \mathcal M_i^{i_1}$ for $k=1$ and
 $\mathcal P_i^{j}(\emptyset)=\mathcal M_i^j$ for $k=0$. For $k\geq 1$ we compute, using multiplicative
 property of weights of composed strands, 
 \begin{equation}\label{eq-pathcalc}
 \begin{split} 
  \sum_{P\in \mathcal P_i^{j}(i_1\ldots i_k)}w(P) \quad &= 
  \sum_{P_k\in \mathcal M_{i_{k}}^{j}, \ldots, P_1\in 
   \mathcal M_{i_1}^{i_2}, P_0\in \mathcal M_i^{i_1}}w(P_k)\ldots w(P_1)w(P_0)\;\\
  &=
  \Bigl(\sum_{P_k\in \mathcal M_{i_k}^{j}}w(P_k)\Bigr)\ldots 
  \Bigl(\sum_{P_1\in \mathcal M_{i_1}^{i_2}} w(P_1)\Bigr)
   \Bigl(\sum_{P_0\in \mathcal M_i^{i_1}} w(P_0)\Bigr)\\
\mbox{\footnotesize by Lemma~\ref{lm-basicsLTW} part (2)\qquad}  
 &= \Burep n (b)_{ji_k}\ldots \Burep n (b)_{i_2i_1}\Burep n (b)_{i_1i}   \\
\mbox{\footnotesize using block form  \qquad}   &= Y_{ji_k}\ldots Q_{i_2i_1}Z_{i_1i}  \;. \\
 \end{split}
 \end{equation}
The set of admissible paths  $\mathcal P_i^{j}[k]$ from $(i,0)$ to $(j,1)$
that intersect $\mathcal L$ exactly $k$ times is the union of all 
$\mathcal P_i^{j}(i_1\ldots i_k)$ for fixed $i$, $j$, and $k\,$. Hence we obtain
for $k\geq 1$ from (\ref{eq-pathcalc}) by summation over all intermediate indices
$i_1,\ldots, i_k$ that
\begin{equation}\label{eq-C1}
 \sum_{P\in \mathcal P_i^{j}[k]}w(P)\,=\,(YQ^{k-1}Z)_{ji}\;.
\end{equation}
In the case $k=0$ we have $\mathcal P_i^{j}[0]=\mathcal{M}^j_i$ so that
\begin{equation}\label{eq-C2}
 \sum_{P\in \mathcal P_i^{j}[0]}w(P)\,=\,X_{ji}\;.
\end{equation} 
Summing terms in (\ref{eq-C1}) and (\ref{eq-C2}) we thus obtain
\begin{equation}\label{eq-Qgeomser}
 \sum_{P\in \mathcal P_i^{j}: \; |P\cap\mathcal L|\leq N}w(P)\,=\,
 \Bigl(X + Y\Bigl(\sum_{r=0}^{N-1}Q^{r}\Bigr)Z\Bigr)_{ji}\;.
\end{equation}
Given that $T$ is a  string link we know by Lemma~\ref{lm-Qnil} that $Q$ specializes to a 
nilpotent matrix at $t=1$ so that we can apply Lemma~\ref{lm-Qest}. Thus, if we choose
any $d<1$ in the latter lemma, the geometric series in (\ref{eq-Qgeomser})
will converge as $N\to\infty$  for $t$ in some $\varepsilon$-vicinity of 1 to the right hand side
expression of (\ref{eq-blfrm}). 
This has to coincide with the rational function limit asserted in Theorem~A of \cite{LTW98}.
By uniqueness of meromorphic continuations we thus
have the desired equality as rational functions for all $t$.
\end{proof}

We conclude with an observations related to the initial interpretation of
$\LTWstr n (T)$ as a stochastic matrix (at least for positive string links), namely
that there are right and left eigenvectors independent of $T$. To this end we
denote the $n$-dimensional row vector $\eige_n=(1,\ldots, 1)$ and let $\eigv_n$ be 
the $n$-dimensional column vector with $\eigv_n^T=(1,t,\ldots, t^{n-1})\,$.
\begin{corr} \label{corr-LTWeigen}
For any string link $T\in \str n $ we have 
\begin{equation}\label{eq-LTWeigen}
 \eige_n\LTWstr n (T)=\eige_n \qquad \mbox{ and } \qquad  \LTWstr n (T)\eigv_n=\eigv_n\;.
\end{equation}
\end{corr}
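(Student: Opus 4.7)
The plan is to reduce the statement to an identity about the full unreduced Burau matrix of a braid via a Markov presentation of $T$ as the partial closure of some $b \in B_{n+m}$, and then to transfer this identity to $\LTWstr n (T)$ using the block formula (\ref{eq-blfrm}) of Proposition~\ref{prop-blfrm}.

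First, I would verify the two eigenvector identities directly on the generators of $B_{n+m}$. The columns of each Burau matrix $\beta_i$ sum to $1$ (since $(1-t)+t=1$ and $1+0=1$), which gives $\eige_{n+m} \beta_i = \eige_{n+m}$. For the right eigenvector, a one-line computation on the $2\times 2$ block acting on $(t^{i-1},t^i)^T$ returns $(t^{i-1},t^i)^T$ after cancellation, so $\beta_i \eigv_{n+m} = \eigv_{n+m}$. Both properties are preserved under matrix multiplication, hence extend by induction on word length to the identities $\eige_{n+m} \Burep{n+m}(b) = \eige_{n+m}$ and $\Burep{n+m}(b) \eigv_{n+m} = \eigv_{n+m}$ for every $b \in B_{n+m}$.

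Second, I would decompose these identities using the block partition (\ref{eq-bclmat}). Write $\eige_{n+m} = (\eige_n, \eige_m)$, and observe that, as a column vector, $\eigv_{n+m}$ splits as $(\eigv_n^T,\, t^n \eigv_m^T)^T$ since the last $m$ entries $(t^n,\ldots,t^{n+m-1})$ equal $t^n \cdot (1,t,\ldots,t^{m-1})$. The two fixed-vector equations then unfold into
\begin{align*}
\eige_n X + \eige_m Z &= \eige_n, & \eige_n Y + \eige_m Q &= \eige_m,\\
X \eigv_n + t^n Y \eigv_m &= \eigv_n, & Z \eigv_n + t^n Q \eigv_m &= t^n \eigv_m.
\end{align*}

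Third, since $(\onem - Q)$ is invertible over $\mathbb Q(t,t^{-1})$ by Proposition~\ref{prop-blfrm}, I can solve the two $Q$-block equations for $\eige_m = \eige_n Y(\onem - Q)^{-1}$ and $t^n \eigv_m = (\onem - Q)^{-1} Z \eigv_n$. Substituting each expression into the remaining equation on its row and factoring yields
$$\eige_n \bigl(X + Y(\onem - Q)^{-1}Z\bigr) = \eige_n, \qquad \bigl(X + Y(\onem - Q)^{-1}Z\bigr) \eigv_n = \eigv_n,$$
which are precisely the two identities (\ref{eq-LTWeigen}) in view of the block formula (\ref{eq-blfrm}). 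There is no serious obstacle; the proof is block-matrix bookkeeping. The only slightly delicate point is the factor $t^n$ introduced when splitting $\eigv_{n+m}$ at position $n$, which must be carried through but cancels cleanly in the substitution step.
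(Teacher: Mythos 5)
Your proposal is correct and follows essentially the same route as the paper: verify the eigenvector identities on the Burau generators, decompose via the block form (\ref{eq-bclmat}), and substitute into the block formula (\ref{eq-blfrm}). The only difference is that you spell out the right-eigenvector half (including the $t^n$ factor from splitting $\eigv_{n+m}$), which the paper dismisses as ``a similar calculation.''
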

\begin{proof}
We first note that (\ref{eq-LTWeigen}) holds for braids since $\eige_n$ and $\eigv_n$
are easily verified to be eigenvectors for the braid generators in (\ref{eq-BnPres}). In particular,
for a braid $b\in B_{n+m}$ whose $m$-closure is $T$ we have 
$\eige_{n+m}\Burep n (b)=\eige_{n+m}$ and $\Burep n (b)\eigv_{n+m}=\eigv_{n+m}$. Since 
$\eige_{n+m}=(\eige_n,\eige_m)$ the former implies $\eige_nX+\eige_mZ=\eige_n$ and 
$\eige_nY+\eige_mQ=\eige_m$, which can also be written as $\eige_nY=\eige_m(\onem - Q)$
Using  (\ref{eq-blfrm}) we find 
$\eige_n\LTWstr n (T)=\eige_nX+\eige_nY(\onem -Q)^{-1}Z=\eige_nX+\eige_m(\onem - Q)(\onem -Q)^{-1}Z=
\eige_nX+\eige_m Z=\eige_n$. A similar calculation, using again Proposition~\ref{prop-blfrm},
shows that $\eigv_n$ is a right eigenvector.
\end{proof}

The fact that $\eige_n$ is a left eigenvector means that all column sums of $\LTWstr n (T)$ 
are one, supplementing a formal proof 
to the intuition for this fact provided in \cite{LTW98}. 
Assuming that $T$ is a  positive string link and $t\in[0,1]$ so that all entries 
in $\LTWstr n (T)$ are non-negative, we thus have that 
$\LTWstr n (T)$ is indeed a stochastic matrix. After suitable renormalization
the positive eigenvector $\eigv_n$  thus represents an equilibrium state 
$\eqp_n=\frac 1 {\langle \eige_n, \eigv_n\rangle}\eigv_n=(p_1,\ldots,p_n)^T$ with
probability of finding a ball in $j$-th position given by 
$p_j=\frac {1-t\,}{\,1-t^n}\, t^j$. If $T$ is in addition non-separable 
and $t\in (0,1)$ this
is the unique equilibrium, see \cite{LTW98}.

Since the stationary state $\eqp$ is independent of $T$ it clearly contains
no information about $T$, answering Remark (4) of \cite{LTW98} in the negative. An interpretation
of these right and left eigenvectors in terms of the representation theory of
$U_{-1}(\mathfrak{sl}_2)$ is be given in Section~\ref{ss-comm}.

\section{Tangle Functors and Exterior Algebras}\label{s-Rmat}

\subsection{Tangle Functor Associated to Alexander Polynomial}\label{ss-TFAP}

In this section we review, with slight variations, Ohtsuki's description in \cite{OH02}  of the tangle functor in
(\ref{eq-OFstrDec}) which is associated to the 
Alexander polynomial and generalizing the Burau representation. The construction is based in Tuarev's set of relations
for R-matrices identified in \cite{Tu89} which imply the extension to a functor on oriented tangles, as stated in Theorem~3.7 of \cite{OH02}. 
The R-matrix associated in  \cite{OH02} to the Alexander polynomial is given as an endomorphism on
$V\otimes V$ where $V$ is the free $\mathbb Z[t^{\frac 12},t^{-\frac 12 }]$ module with generators $e_0$ and $e_1$.
In the basis $\{e_0\otimes e_0,e_0\otimes e_1,e_1\otimes e_0,e_1\otimes e_1\}$ it is has the form
\begin{equation}\label{eq-BurRmat}
 R=\left[
\begin{array}{cccc}
 t^{-\frac 12} &0&0&0\\
0 & 0 & 1 &0\\
0 & 1 & t^{-\frac 12}-t^{\frac 12} & 0\\
0 & 0 & 0 & -t^{\frac 12 }\\
\end{array}
\right]\;.
\end{equation}
This R-matrix implements  a representation  of $\psi_n:B_n\to \mathrm{End}(V^{\otimes n})$ 
in the usual manner, see (2.1) of  \cite{OH02}, 
 so that  $\psi_n(b)$ coincides with  $\OhtsFct(b)$ for a braid $b\in B_n$. 

We note that our convention for orientations is the opposite of that in \cite{OH02}, where downwards arrows
are considered positive orientations. However, diagrams in \cite{OH02} there are easily translated to our 

As already indicated in Section~\ref{ss-functgl} of the introduction, the tangle functors 
preserves a natural grading on the associated vector spaces which can be expressed more 
formally as follows. Specifically, define an endomorphism $\ndgop(\underline \epsilon)$ 
on the module 
$\OhtsFct(\underline{\epsilon})=V^{\epsilon_1}\otimes\ldots \otimes V^{\epsilon_n}$
acting diagonally in the natural basis by
\begin{equation}\label{eq-gradop}
 \ndgop(\underline \epsilon)(e_{i_1}^{\epsilon_1}\otimes \ldots \otimes e_{i_n}^{\epsilon_n})
=\bigl(\sum_{s=1}^n\epsilon_si_s\bigr)e_{i_1}^{\epsilon_1}\otimes \ldots \otimes e_{i_n}^{\epsilon_n}\;.
\end{equation}
Here we use the convention $e_j^+=e_j$ and  $e_j^-=e_j^*$ for basis vectors of $V^+=V$ and $V^-=V^*$
respectively. The eigenspaces of $\ndgop(\underline \epsilon)$ are thus the graded components of 
$\OhtsFct(\underline \epsilon)$.  It is readily checked that the morphisms in (\ref{eq-gradop}) in fact
a natural transformation $\ndgop:\OhtsFct\stackrel{\bullet}{\longrightarrow}\OhtsFct$, which is 
another, equivalent way of saying that $\OhtsFct$ preserves the natural grading. 
convention by simply reversing all arrows. 

In the case when all signs are positive we denote further 
$\dgop^{\otimes}_n:=\ndgop(\posar{n})\in \mathrm{End}(V^{\otimes n})$ which has 
eigenvalue $k=|\{s:i_s=1\}|$ on  $e_{i_1}\otimes\ldots\otimes e_{i_n}\,$.
The eigenspace 
\begin{equation}\label{eq-grVdef}
 \grV n k=\mathrm{ker}(\dgop^{\otimes}_n-k\onem)
\end{equation}
is thus the $k$-graded 
component of rank $n \choose k$ in $V^{\otimes n}$, as noted in (\ref{eq-OFstrDec}), and is invariant under 
the braid group action. 

Observe also that the R-matrix in (\ref{eq-BurRmat}) specializes for $t=1$ to a signed permutation given by
$R(e_i\otimes e_j)=(-1)^{ij}e_j\otimes e_i$ so that in particular $R=R^{-1}$. The latter implies that in
this case crossings of a tangle $T$ can be changed arbitrarily without changing $\OhtsFct(T)$ so that we may
assume $T$ to be a braid. Hence $\OhtsFct(T)$ is a composition of signed permutations.
Moreover, on $\grV n 0=\langle e_0^{\otimes n}\rangle$ all $R$ acts as identity at $t=1$, which implies
the following statement.
\begin{lemma}\label{lm-t=1OF}
The endomorphism $\OFstr{n}{k}(T)$ reduce to signed permutations on the canonical basis in the specialization $t=1$.

In particular $\OFstr{n}{0}(T)=1$ at $t=1$. 
\end{lemma}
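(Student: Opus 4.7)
The key observation, already noted just before the lemma, is that at $t=1$ the R-matrix in (\ref{eq-BurRmat}) specializes to $R_0(e_i\otimes e_j)=(-1)^{ij}e_j\otimes e_i$, which is the symmetry on $V$ regarded as the super vector space $\mathbb{C}^{1|1}$ with $e_0$ even and $e_1$ odd. Because $R_0=R_0^{-1}$, equivalently $R_0^2=\mathrm{id}$, the braid representation $\psi_n|_{t=1}$ factors through the symmetric group $S_n$ and acts on the canonical basis of $V^{\otimes n}$ as a signed permutation.

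For the first assertion my plan is to use the Markov presentation, established just before Lemma~\ref{lm-Qnil}, to write $T\in\str{n}$ as the partial closure of a braid $b\in B_{n+m}$. Because $\OhtsFct$ is a ribbon tangle functor, this closure is computed as the partial quantum trace of $\OhtsFct(b)$ over the last $m$ tensor factors; at $t=1$ the pivotal element becomes the parity involution on $V$ and the partial quantum trace reduces to a partial super-trace weighted by $(-1)^{|K|}$ over configurations $K\in\{0,1\}^m$ of the closing strands. Since $\OhtsFct(b)|_{t=1}$ is a signed permutation on the canonical basis of $V^{\otimes(n+m)}$ governed by $\pi_b\in S_{n+m}$, each matrix entry $\langle e_I\otimes e_K\mid\OhtsFct(b)\mid e_J\otimes e_K\rangle|_{t=1}$ is $\pm 1$ when $\pi_b(J,K)=(I,K)$ and $0$ otherwise. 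The block form of $\pi_b$ in (\ref{eq-bclmat}) combined with the nilpotency of $\spo Q$ from Lemma~\ref{lm-Qnil} forces, for each $J$, a unique fixed closing configuration $K$, namely the one obtained by requiring the label to be constant along each component of $T$; such a $K$ exists exactly when $I$ is the image of $J$ under the component permutation $\pi_T$. Consequently $\OhtsFct(T)|_{t=1}$ is itself a signed permutation on the canonical basis of $V^{\otimes n}$, and its restriction $\OFstr{n}{k}(T)|_{t=1}$ to the invariant subspace $\grV{n}{k}$ remains a signed permutation.

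For the second assertion, I would then take $J=(0,\ldots,0)$ so that $e_J=e_0^{\otimes n}$ spans the one-dimensional space $\grV{n}{0}$. Constancy of labels along each component of $T$, combined with all open-strand labels being $0$, forces the unique fixed closing configuration $K$ to be identically $0$. On this all-zero configuration every $R_0$ factor contributes the trivial sign $(-1)^{0\cdot 0}=+1$, and every super-trace weight is $(-1)^{0}=+1$. Hence $\langle e_0^{\otimes n}\mid\OhtsFct(T)\mid e_0^{\otimes n}\rangle=1$ at $t=1$, which is precisely the statement $\OFstr{n}{0}(T)|_{t=1}=1$.

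The main obstacle I anticipate is establishing rigorously that the partial closure morphism of $\OhtsFct$ specializes at $t=1$ to the partial super-trace, that is, that the implicit pivotal/ribbon element of Ohtsuki's setup becomes the parity involution on $V$; this is a direct but somewhat tedious check against the cup and cap formulas of Section~3.3 of \cite{OH02}. If one wishes to bypass it, the entire argument can be carried out purely within R-matrix calculus on braids, reading off both the signed-permutation structure and the $(0,\ldots,0)$-diagonal entry at $t=1$ from an explicit block-matrix expansion of $\OhtsFct(T)$ in the spirit of Proposition~\ref{prop-blfrm}; either route makes the two conclusions of the lemma transparent.
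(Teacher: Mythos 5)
Your argument is correct, but it takes a genuinely different and considerably heavier route than the paper's. The paper proves the lemma in the paragraph immediately preceding it: since $R=R^{-1}$ at $t=1$, the value of $\OhtsFct$ on a tangle is unchanged under arbitrary crossing changes; a string link can be converted into a braid by crossing changes and isotopy, so $\OhtsFct(T)|_{t=1}$ is a composition of the signed permutations $e_i\otimes e_j\mapsto(-1)^{ij}e_j\otimes e_i$, each of which fixes $e_0^{\otimes n}$, giving $\OFstr{n}{0}(T)=1$ at $t=1$. You instead pass to a Markov presentation and evaluate the partial quantum trace at $t=1$ as a signed sum over closing configurations, using the nilpotency of $\spo{Q}$ from Lemma~\ref{lm-Qnil} to show that for each input label $J$ exactly one closing configuration $K$ survives. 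This works, and the ``obstacle'' you flag is not really one: the closure-as-partial-quantum-trace statement is exactly equation (\ref{eq-OhClTr}), and at $t=1$ the element $h=t^{\frac 12}\mathrm{diag}(1,-1)$ of (\ref{eq-htens}) is visibly the parity involution, so the partial quantum trace is the partial supertrace with no further checking. What your route buys is an explicit view of why string-linkhood matters --- a closed component would admit two consistent labelings whose supertrace weights cancel --- a fact the paper's crossing-change argument hides inside the observation that only tangles without closed components can be unbraided by crossing changes. What it costs is length, and a mild anachronism: you rely on (\ref{eq-OhClTr}) and the Markov presentation, which the paper places around rather than before this lemma (though there is no logical circularity).
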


Another useful property of the tangle functor is its
equivariant with respect to the $U_{-1}(\mathfrak{sl}_2)$
action given in \cite{OH02}. In the particular case of a  tangle $T:\posar{n}\to \posar{n}$ this 
property implies 
that $\OhtsFct(T)\in\mathrm{End}(V^{\otimes n})$ commutes with operators
\begin{equation}\label{eq-htens}
 h^{\otimes n}= t^{\frac n2 }(-1)^{\dgop^{\otimes}_n}\;, \qquad \mbox{ where } \quad h=t^{\frac 12 }\left[\begin{array}{cc}
                                                  1 & 0 \\ 0 & -1 \\
                                                 \end{array}\right] \;,
\end{equation}
as well as 
\begin{equation}\label{eq-EFtens}
\begin{split}
 \widetilde E_n=&\sum_{i=1}^n\onem^{\otimes i-1}\otimes\left[\begin{array}{cc}
                                                  0 & 1 \\ 0 & 0 \\
                                                 \end{array}\right] \otimes (h^{-1})^{\otimes n-i}\\
\mbox{and}
  \qquad \widetilde F_n=&\sum_{i=1}^nh^{\otimes i-1}\otimes\left[\begin{array}{cc}
                                                  0 & 0 \\ 1 & 0 \\
                                                 \end{array}\right] \otimes  \onem^{\otimes n-i}\;.
\end{split}
\end{equation}
The operators in (\ref{eq-htens}) and (\ref{eq-EFtens}) describe the 
 actions of $n$-fold coproducts of rescaled generators of $U_{-1}(\mathfrak{sl}_2)$ and they fulfill
basic relations. For example, $h^{\otimes n}$ anti-commutes with both  $\widetilde E_n$ and $\widetilde F_n$,
and $[\widetilde E_n,\widetilde F_n]=  (t^{\frac 12}-t^{-\frac 12})^{-1}\bigl(h^{\otimes n}-(h^{-1})^{\otimes n}\bigr)\,$.

Functoriality also implies that the operator invariant of a tangle $T:\posar{n}\to\posar{n}$ is given by 
the partial quantum trace over the invariant for a braid $b\in B_{n+m}$ if $T$ is given as the closure of $b$
in the sense of Figure~\ref{fig:Markov_Pres}. 
More precisely,  for an endomorphism 
$f\in\mathrm{End}(V^{\otimes n})$ define its quantum trace in terms of the canonical trace as follows
\begin{equation}
 \qtr_n(f)= trace_{V^{\otimes n}}(h^{\otimes n}f)\;.
\end{equation}
It follows from the evaluations and coevaluations associated to extrema in Section~3.3 of \cite{OH02}
(again with reversed orientation convention) that closing off a right most strand of a  tangle diagram
with an arc corresponds to applying $\qtr_1$ to contract the respective indices of the associated operator.
Iterating the process we obtain for the closure $T$ of a braid $b$ as above
\begin{equation}\label{eq-OhClTr}
 \OhtsFct(T)\,=\,id^{\otimes n}\otimes \qtr_m(\OhtsFct(b))\,,
\end{equation}
where we suppress notation for the natural isomorphism 
$\mathrm{End}(V^{\otimes n+m})\cong\mathrm{End}(V^{\otimes n})\otimes\mathrm{End}(V^{\otimes n})\,$.

\subsection{Isomorphisms with Exterior  Algebra Representations}\label{ss-ExtAlg}

In this section we outline, again with some variations in conventions and normalizations, the equivalence of
the braid representation $\psi_n$ obtained from $\OhtsFct$ and the exterior algebra extension of the 
Burau representation given in Appendix~3 of \cite{OH02}.

Denote by $\Burmod_n$ the free $\mathbb Z[t^{\frac 12},t^{-\frac 12}]$-module with basis $\{v_1,\ldots, v_n\}$ 
and by $\Burep n : B_n\to \mathrm{End}(\Burmod_n)$ 
 the braid group representation as
given in 
(\ref{eq-burauR}) and (\ref{eq-burauM}). The extension to the exterior algebra is thus
\begin{equation}\label{eq-WdgBur}
\extalg * \Burep n:\,B_n\,\to\,\mathrm{End}\bigl(\extalg * \Burmod_n\bigr)\,:\;b\,\mapsto\,\extalg *\Burep n (b)\;.
\end{equation}
The action is clearly also graded with invariant submodules $\extalg{k}\Burmod_n$. As before  it is useful to
encode the grading as an operator on $\extalg{*}M_n$ defined by 
\begin{equation}
\dgop^{\wedge}_n(\omega)=k\omega\qquad \mbox{ for } \quad \omega\in \extalg{k}\Burmod_n\,.
\end{equation}
This allows us to define, similar to the quantum trace above, a supertrace $\sutr_n$
on morhisms $f\in\mathrm{End}\bigl(\extalg * \Burmod_n\bigr)$ by
\begin{equation}\label{eq-defsutr}
 \sutr_n(f) \, = \, trace_{\sextalg{*}\Burmod_n}\bigl((-1)^{\dgop^{\wedge}_n}f\bigr)\;.
\end{equation}

Analogous to (C.3) in Appendix~3 of \cite{OH02} we next define for each $n$ an isomorphism $\Isom_n:V^{\otimes n}\to \extalg{*}\Burmod_n$
by induction as follows
\begin{equation}\label{eq-TWisom}
\begin{tikzpicture}[scale=1]
 \node (A) at (0,0) {$\Isom_n:\,V^{\otimes n}$};
 \node (B) at (4.6,0) {$\extalg{*}M_{n-1}\otimes V$};
 \node (C) at (9,0) {$\extalg{*}M_{n}$};
 \node (E) at (4.5,-1) {$\alpha_0\otimes e_0 \,+\, \alpha_1\otimes e_1\;\,$};
 \node (F) at (9.3,-1) {$\;\,\alpha_0 \,+\, t^{\frac n 2}\alpha_1\wedge v_n\;.$};
 \node (G) at (-1,-1) {with};
 \path[font=\scriptsize, arrows={-angle 90}]
 (B) edge   (C)
  (A) edge node[above]{$\Isom_{n-1}\otimes id_V$} (B);
\path[font=\scriptsize, arrows={|-angle 90}]
   (E) edge   (F);
\end{tikzpicture}
\end{equation}
Some immediate properties of these isomorphisms implied by (\ref{eq-TWisom}) include
that they preserve the respective gradings, that is,
\begin{equation}\label{eq-isograd}
 \Isom_n\dgop^{\otimes}_n\,=\,\dgop^{\wedge}_n\Isom_n\;,
\end{equation}
and that they factor, up to scaling, with respect to products of spaces in the sense of the following commutative 
diagram:
\begin{equation}\label{eq-IsoFact}
 \begin{tikzpicture}[scale=1]
  \node (A) at (1.8,0) {$V^{\otimes n+m}\;\; $};
  \node (B) at (6.2,0) {$\;\;V^{\otimes n}\otimes V^{\otimes m}$};
  \node (C) at (0,1.7) {$\extalg{*}M_{n+m}\;\;$};
   \node (D) at (3.7,1.7) {$\;\extalg{*}(M_{n}\oplus M_m)\;$};
  \node (E) at (8,1.7) {$\;\;\extalg{*}M_{n}\otimes \extalg{*}M_{m}$};
  \path[-]
  ([yshift=1pt]C.east) edge  ([yshift=1pt]D.west)
  ([yshift=-1pt]C.east) edge  ([yshift=-1pt]D.west)
 ([yshift=1pt]D.east) edge  ([yshift=1pt]E.west)
  ([yshift=-1pt]D.east) edge  ([yshift=-1pt]E.west)
([yshift=1pt]A.east) edge  ([yshift=1pt]B.west)
 ([yshift=-1pt]A.east) edge  ([yshift=-1pt]B.west);
 \path[arrows={-angle 90 }]
 (A) edge node[left]{$\Isom_{n+m}$} (C)
  (B) edge node[right]{$\quad  \Isom_{n}\otimes t^{\frac n2\dgop^{\wedge}_m}\Isom_m$} (E);
\end{tikzpicture}
\end{equation}
Here double lines indicate obvious canonical isomorphisms and the factor $t^{\frac n2\dgop^{\wedge}_m}$ 
stems from the shift in the basis labeling for $M_{n+m}\cong M_n\oplus M_m$.

In order to state the equivalence of braid group representations we also consider the 
representation $\widehat\psi_n:B_n\to \mathrm{End}(V^{\otimes n})$ obtained from the 
rescaled R-matrix $\widehat R= t^{\frac 12}R$, with $R$ as in (\ref{eq-BurRmat}). 
It is related to the original representation by 
\begin{equation}\label{eq-psiresc}
 \widehat\psi_n(b)\,=\,t^{\frac 12 \swri (b)}\psi_n(b)\,=\,t^{\frac 12 \swri (b)}\OhtsFct(b)\;,
\end{equation}
where $\wri(b)$ is the writhe of $b$ given by the number of positive crossing minus the 
number of negative crossings of $b\,$.

The following lemma is essentially identical to Lemma~C.1 in \cite{OH02} and is verified by direct computation using 
(\ref{eq-TWisom}) and (\ref{eq-IsoFact}):
\begin{lemma}[{\cite{OH02}}] \label{lm-xBRequiv}
The isomorphisms defined in (\ref{eq-TWisom}) provides
and equivalence between the 
representations $\psi_n$ and $\extalg{*}\Burep n $ of $B_n$.
 That is, for any $b\in B_n$ we have the following commutative diagram. 
 \begin{equation}\label{eq-xBRequiv}
 \begin{tikzpicture}[scale=1]
  \node (A) at (0,0) {$V^{\otimes n}$};
  \node (B) at  (0,1.7) {$V^{\otimes n}$};
 \node (C) at (3,0) {$\extalg{*}M_n$};
 \node (D) at (3,1.7) {$\extalg{*}M_n$};
  \path[arrows={-angle 90 }]
   (B) edge node[above] {\small $\Isom_n$}(D)
   (A) edge node[below] {\small $\Isom_n$}(C)
   (B) edge node[left]{$\widehat\psi_n(b)\;$} (A)
    (D) edge node[right]{$\;\extalg{*}\Burep n(b)$} (C);
\end{tikzpicture}
 \end{equation}
Moreover, all isomorphisms preserve gradings so that  (\ref{eq-xBRequiv})
also holds when restricted to sub-representations $\grV nk$ and $\extalg{k}M_n$
instead of $V^{\otimes n}$ and $\extalg{*}M_n$.
\end{lemma}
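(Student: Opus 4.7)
The plan is to verify the commutative diagram (\ref{eq-xBRequiv}) for each generator $\sigma_i$ of $B_n$, which suffices since both $\widehat\psi_n$ and $\extalg{*}\Burep{n}$ are homomorphisms and the isomorphism $\Isom_n$ does not depend on $b$.

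First, I would reduce the general case to a local calculation on two consecutive strands. For any fixed $i\in\{1,\ldots,n-1\}$, the generator $\sigma_i$ acts nontrivially only on the $i$-th and $(i+1)$-st tensor factors in both representations. Using (\ref{eq-IsoFact}) twice to split off the first $i-1$ and last $n-i-1$ factors, one obtains a commutative square in which $\Isom_n$ is expressed as
\begin{equation*}
 \Isom_{i-1}\,\otimes\,t^{\frac{i-1}{2}\dgop^{\wedge}_{2}}\Isom_{2}\,\otimes\, t^{\frac{i+1}{2}\dgop^{\wedge}_{n-i-1}}\Isom_{n-i-1}\;,
\end{equation*}
up to the canonical identification. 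Since $\widehat R$ commutes with the grading operator $\dgop^{\otimes}_2$ (which is clear from the matrix (\ref{eq-BurRmat}) as each nonzero entry connects equal-degree basis vectors) and $\extalg{*}\beta_i$ commutes with $\dgop^{\wedge}_n$, the scaling factors $t^{k/2\,\dgop^{\wedge}}$ pass through the braid action harmlessly on the outer two factors and cancel on the inner one. Hence the claim reduces to the single case $n=2$, $b=\sigma_1$.

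Next, I would check the base case by direct computation. Unwinding (\ref{eq-TWisom}), the isomorphism $\Isom_2$ sends the ordered basis $\{e_0\otimes e_0,\,e_0\otimes e_1,\,e_1\otimes e_0,\,e_1\otimes e_1\}$ to $\{1,\,t\,v_2,\,t^{\frac 12}v_1,\,t^{\frac 32}v_1\wedge v_2\}$. Applying $\widehat R=t^{\frac 12}R$ to these four vectors using (\ref{eq-BurRmat}) and then $\Isom_2$, and comparing with $\extalg{*}\beta_1$ acting by the matrix $\left[\begin{smallmatrix}1-t&1\\t&0\end{smallmatrix}\right]$ in degree one and by $\det(\beta_1)=-t$ in degree two, produces identical outputs on each basis vector. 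In particular the nontrivial match $\widehat\psi_2(\sigma_1)(e_1\otimes e_0)\mapsto t^{\frac 32}v_2+(1-t)t^{\frac 12}v_1$ agrees with $\extalg{1}\beta_1(t^{\frac 12}v_1)$, and the top-degree eigenvalue $-t$ shows up symmetrically on both sides.

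Finally, the sub-representation statement is automatic: by (\ref{eq-isograd}) the isomorphism $\Isom_n$ identifies the eigenspaces $\grV{n}{k}=\ker(\dgop^{\otimes}_n-k\onem)$ with $\extalg{k}M_n=\ker(\dgop^{\wedge}_n-k\onem)$, and both $\widehat\psi_n(b)$ and $\extalg{*}\Burep{n}(b)$ preserve their respective gradings, so restricting the established diagram to the $k$-graded subspaces yields the claim. The only real obstacle in carrying out this plan is the bookkeeping of the powers of $t^{\frac 12}$ introduced both by the rescaling $\widehat R=t^{\frac 12}R$ and by the factor $t^{\frac n 2}$ in the inductive definition (\ref{eq-TWisom}); once one checks that these match in the $n=2$ case, the factorization argument propagates the equality to all $n$.
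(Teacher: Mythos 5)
Your proposal is correct and follows essentially the same route as the paper, which simply asserts that the lemma ``is verified by direct computation using (\ref{eq-TWisom}) and (\ref{eq-IsoFact})'' (citing Lemma~C.1 of \cite{OH02}); you have merely carried out that computation explicitly, and your base-case values $\Isom_2(e_0\otimes e_1)=t\,v_2$, $\Isom_2(e_1\otimes e_0)=t^{\frac 12}v_1$ and the matching of $\widehat R$ with $\extalg{*}\beta_1$ all check out. No gaps.
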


\subsection{Relations Between Traces}\label{ss-TrRel}
The aim of this section is to replace the quantum trace formula for braid closures (\ref{eq-OhClTr}) by
traces over exterior algebras and thus reduce the proof of  Theorem~\ref{thm-main} to the exterior 
linear algebra of the Burau representation. We begin with notation for the partial supertrace given by
the following composite of natural isomorphisms and $\sutr_n$ as in (\ref{eq-defsutr}).
\begin{equation} \label{eq-defSTR}
 \begin{tikzpicture}[scale=1]
   \node (A) at (0,0) {$ \STR^{n+m}_n:\mathrm{End}\bigl(\extalg{*}M_{n+m}\bigr)$};
  \node (B) at (5.1,0) {$\mathrm{End}\bigl(\extalg{*}M_{n}\otimes\extalg{*}M_{m} \bigr)$};
  \node (C) at (8,0) {};
 \node (D) at (-1.4,-1.3) {};
 \node (E) at (2,-1.3){$\mathrm{End}\bigl(\extalg{*}M_{n}\bigr)\otimes\mathrm{End}\bigl(\extalg{*}M_{m}\bigr)\;$};
\node (F) at (9,-1.3){$ \;\mathrm{End}\bigl(\extalg{*}M_{n}\bigr)$}; 
 \path[-]
  ([yshift=1pt]B.east) edge  ([yshift=1pt]C.west)
  ([yshift=-1pt]B.east) edge  ([yshift=-1pt]C.west)
 ([yshift=1pt]D.east) edge  ([yshift=1pt]E.west)
  ([yshift=-1pt]D.east) edge  ([yshift=-1pt]E.west)
([yshift=1pt]A.east) edge  ([yshift=1pt]B.west)
 ([yshift=-1pt]A.east) edge  ([yshift=-1pt]B.west);
 \path[arrows={-angle 90 }]
(E) edge node[above] {$id\otimes \sutr_m$} (F);
  \end{tikzpicture}
\end{equation}
Moreover, for an endomorphism $\,f\in\mathrm{End}\bigr(\extalg{*}M_n\bigl)\,$ we denote the conjugate
\begin{equation}
 f^{\Isom}\,=\,\Isom_n^{-1}f\Isom_n\quad \in \;\mathrm{End}(V^{\otimes n})\;.
\end{equation}
The explicit relation between supertrace and quantum trace is  stated  in this terminology in the next lemma. 
\begin{lemma}\label{lm-traces}
 Suppose $f\in \mathrm{End}\bigl(\extalg{*}M_{n+m}\bigr)$ then
\begin{equation}\label{eq-traces}
 id\otimes \qtr_m(f^{\Isom})\,=\,t^{\frac m2} \STR^{m+n}_n(f)^{\Isom}\;.
\end{equation}
\end{lemma}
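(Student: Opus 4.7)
My plan is to reduce the identity to an identity of traces on a single exterior algebra factor by using the tensor factorization of $\Isom_{n+m}$ provided by diagram~(\ref{eq-IsoFact}), and then to verify the scalar factor $t^{m/2}$ by a direct computation using the explicit form of $h^{\otimes m}$ in (\ref{eq-htens}) together with the grading-preservation property (\ref{eq-isograd}).

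More precisely, the first step is to identify $V^{\otimes n+m}\cong V^{\otimes n}\otimes V^{\otimes m}$ and $\extalg{*}M_{n+m}\cong \extalg{*}M_n\otimes \extalg{*}M_m$ canonically, so that by (\ref{eq-IsoFact}) the isomorphism $\Isom_{n+m}$ becomes $\Isom_n\otimes (t^{\frac n2 \dgop^{\wedge}_m}\Isom_m)$. Using the natural identification $\mathrm{End}(\extalg{*}M_{n+m})\cong \mathrm{End}(\extalg{*}M_n)\otimes \mathrm{End}(\extalg{*}M_m)$, I write $f=\sum_i a_i\otimes b_i$. Conjugation by $\Isom_{n+m}$ then gives
\begin{equation*}
 f^{\Isom}\,=\,\sum_i a_i^{\Isom_n}\otimes \Isom_m^{-1}\bigl(t^{-\frac n2 \dgop^{\wedge}_m}b_i t^{\frac n2 \dgop^{\wedge}_m}\bigr)\Isom_m\;.
\end{equation*}
Since $id\otimes \qtr_m$ is $\mathrm{End}(\extalg{*}M_n)$-linear, the claim reduces to showing for each summand that
\begin{equation*}
 \qtr_m\Bigl(\Isom_m^{-1}\bigl(t^{-\frac n2 \dgop^{\wedge}_m}b\,t^{\frac n2 \dgop^{\wedge}_m}\bigr)\Isom_m\Bigr)\,=\,t^{\frac m2}\sutr_m(b)\;.
\end{equation*}

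To verify this scalar identity, I convert the quantum trace on $V^{\otimes m}$ to a trace on $\extalg{*}M_m$ via conjugation by $\Isom_m$, obtaining
\begin{equation*}
\mathrm{trace}_{\sextalg{*}M_m}\bigl((\Isom_m h^{\otimes m}\Isom_m^{-1})\cdot t^{-\frac n2\dgop^{\wedge}_m} b\, t^{\frac n2\dgop^{\wedge}_m}\bigr)\;.
\end{equation*}
The crucial computation is $\Isom_m h^{\otimes m}\Isom_m^{-1}=t^{\frac m2}(-1)^{\dgop^{\wedge}_m}$, which follows immediately from (\ref{eq-htens}) together with the grading intertwining relation $\Isom_m\dgop^{\otimes}_m=\dgop^{\wedge}_m\Isom_m$ of (\ref{eq-isograd}). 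Since both $(-1)^{\dgop^{\wedge}_m}$ and $t^{\pm \frac n2\dgop^{\wedge}_m}$ are diagonal in the grading and thus mutually commute, cyclicity of the trace lets the factors $t^{\pm \frac n2 \dgop^{\wedge}_m}$ cancel, leaving $t^{\frac m2}\,\mathrm{trace}_{\sextalg{*}M_m}((-1)^{\dgop^{\wedge}_m}b)=t^{\frac m2}\sutr_m(b)$, as desired.

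The only substantive point is the bookkeeping of the shift factor $t^{\frac n2\dgop^{\wedge}_m}$ appearing in (\ref{eq-IsoFact}); its role here is merely to conjugate $b$, and cyclicity of the trace combined with the fact that it commutes with $(-1)^{\dgop^{\wedge}_m}$ makes it disappear. The overall scaling $t^{\frac m2}$ on the right-hand side is then the sole contribution from $h^{\otimes m}$. No issues involving tangles or R-matrices enter the argument: once the factorization of $\Isom_{n+m}$ is in hand, the identity is pure linear algebra about traces of graded endomorphisms.
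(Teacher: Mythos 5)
Your argument is correct and follows essentially the same route as the paper's proof: decompose $f=a\otimes b$ via the factorization (\ref{eq-IsoFact}), compute $\Isom_m h^{\otimes m}\Isom_m^{-1}=t^{\frac m2}(-1)^{\dgop^{\wedge}_m}$ from (\ref{eq-htens}) and (\ref{eq-isograd}), and use cyclicity together with the fact that the grading operators commute to cancel the shift factors $t^{\pm\frac n2\dgop^{\wedge}_m}$. No gaps.
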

\begin{proof}
 By linearity we may assume that $f=a\otimes b$ with $a\in \mathrm{End}\bigr(\extalg{*}M_n\bigl)$ and
$b\in \mathrm{End}\bigr(\extalg{*}M_m\bigl)$  modulo the isomorphism indicated in (\ref{eq-defSTR}).
Using (\ref{eq-IsoFact}) we thus find
$$
f^{\Isom}\,=\,\Isom_{n+m}^{-1}f\Isom_{n+m}\,=\,\Isom_n^{-1}a\Isom_n\otimes \Isom_m^{-1}t^{-\frac n2\dgop^{\wedge}_m}b t^{\frac n2\dgop^{\wedge}_m}\Isom_m
$$
so that we find for the partial quantum trace
\begin{equation}
\begin{split} 
 id\otimes \qtr_m(f^{\Isom})\,
&=\,\Isom_n^{-1}a\Isom_n \cdot trace_{V^{\otimes m}}\Bigl(h^{\otimes m}\Isom_m^{-1}t^{-\frac n2\dgop^{\wedge}_m}b t^{\frac n2\dgop^{\wedge}_m}\Isom_m\Bigr) \\
&=\,a^{\Isom} \cdot trace_{\sextalg{*}M_m}\bigl(Pb\bigr)
\end{split} 
\end{equation}
using cylicity of the canonical trace in the last step to combine appearing isomorphisms in the isomorphism $P$ which is given and 
evaluated as follows.
\begin{equation}
\begin{split}
  P&=t^{\frac n2\dgop^{\wedge}_m}\Isom_m h^{\otimes m}\Isom_m^{-1}t^{-\frac n2\dgop^{\wedge}_m}
=^{by (\ref{eq-htens})}t^{\frac m2}t^{\frac n2\dgop^{\wedge}_m}\Isom_m(-1)^{\dgop_m^{\otimes}}\Isom_m^{-1}t^{-\frac n2\dgop^{\wedge}_m}\\
&=^{by (\ref{eq-isograd})}t^{\frac m2}t^{\frac n2\dgop^{\wedge}_m} (-1)^{\dgop_m^{\wedge}} t^{-\frac n2\dgop^{\wedge}_m}\,=\,
t^{\frac m2}  (-1)^{\dgop_m^{\wedge}}\;.
\end{split}
\end{equation}
On the other hand we have $\STR^{n+m}_n(f)=a\cdot\sutr_m(b)=a\cdot trace_{\extalg{*}M_m}\bigr( (-1)^{\dgop_m^{\wedge}}b\bigr)$
by definitions in (\ref{eq-defsutr}) and (\ref{eq-defSTR}) from which (\ref{eq-traces}) readily follows.
\end{proof}

From the traces equivalence we can now compute the tangle functor $\OhtsFct$ on string links from
the Burau representation.
\begin{corr} \label{corr-OhtStrRel}
Suppose $T:\posar{n}\to\posar{n}$ is a string link presented as the closure of a braid $b\in B_{n+m}$.
Then
\begin{equation}\label{eq-OhtStrRel}
 \OhtsFct(T)= t^{\frac 12 (m+\swri(b))}\cdot \STR^{n+m}_n(\extalg{*}\Burep n (b))^{\Isom}\;.
\end{equation} 
\end{corr}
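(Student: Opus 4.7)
The plan is to chain together three earlier results into a single computation, tracking the scalar factors carefully. Starting from equation~(\ref{eq-OhClTr}), the invariant of the closure satisfies $\OhtsFct(T)=id^{\otimes n}\otimes \qtr_m(\OhtsFct(b))$, so the task reduces to rewriting the partial quantum trace on the right as a partial supertrace over the exterior algebra of the Burau module.

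First I would use the writhe renormalization~(\ref{eq-psiresc}) to replace $\OhtsFct(b)=\psi_{n+m}(b)$ by $t^{\pm\frac12\swri(b)}\widehat\psi_{n+m}(b)$, thereby passing to the representation $\widehat\psi$ associated to the rescaled R-matrix $\widehat R = t^{\frac12}R$. Next, Lemma~\ref{lm-xBRequiv} applied in $n+m$ strands identifies $\widehat\psi_{n+m}(b)$ with the conjugate $\bigl(\extalg{*}\Burep{n+m}(b)\bigr)^{\Isom}$ under $\Isom_{n+m}$. This places the expression exactly in the form to which Lemma~\ref{lm-traces} applies, with $f=\extalg{*}\Burep{n+m}(b)$; the lemma then converts $id^{\otimes n}\otimes \qtr_m((\cdot)^{\Isom})$ into $t^{\frac m2}\STR^{n+m}_n(\cdot)^{\Isom}$, contributing the second half of the prefactor.

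Combining the scalar from the writhe renormalization with the scalar $t^{\frac m2}$ coming from the intertwiner $P=t^{\frac m2}(-1)^{\dgop^{\wedge}_m}$ in Lemma~\ref{lm-traces} yields the overall factor $t^{\frac12(m+\swri(b))}$ advertised in the statement. There is no substantial obstacle at this point, since the conceptual content was already packaged into (\ref{eq-OhClTr}), Lemma~\ref{lm-xBRequiv}, and Lemma~\ref{lm-traces}; the only step requiring genuine care is the bookkeeping of the half-integer powers of $t$, in particular making sure the shift $t^{\frac n2\dgop^{\wedge}_m}$ appearing in the factorization diagram~(\ref{eq-IsoFact}) has been properly absorbed when Lemma~\ref{lm-traces} is invoked on the full $(n+m)$-strand object rather than on the tensor factors separately.
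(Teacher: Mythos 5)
Your proposal is correct and follows essentially the same route as the paper: the paper's proof is exactly the chain of (\ref{eq-OhClTr}), the writhe rescaling (\ref{eq-psiresc}), Lemma~\ref{lm-xBRequiv}, and Lemma~\ref{lm-traces}, merely written from the right-hand side to the left rather than starting from $\OhtsFct(T)$ as you do. Your caution about the sign of the writhe exponent is well placed (the paper itself is slightly cavalier about whether (\ref{eq-psiresc}) contributes $t^{+\frac12\swri(b)}$ or $t^{-\frac12\swri(b)}$), but this does not affect the structure of the argument.
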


\begin{proof} The proof is a direct computation combining previous results
\begin{equation}
 \begin{split}
\mbox{\small Right hand side of (\ref{eq-OhtStrRel})\quad } &=  t^{\frac 12 \swri(b)}t^{\frac m2}\cdot \STR^{n+m}_n(\extalg{*}\Burep n (b))^{\Isom}\\\
\mbox{\small by Lemma~\ref{lm-traces}\quad }   &= t^{\frac 12 \swri(b)}id\otimes \qtr_m\Bigl(\extalg{*}\Burep n (b)^{\Isom}\Bigr)\\
\mbox{\small by Lemma~\ref{lm-xBRequiv}\quad }   &= t^{\frac 12 \swri(b)}id\otimes \qtr_m\bigl(\widehat\psi_{n+m}(b)\bigr)\\
\mbox{\small by (\ref{eq-psiresc})\quad}
&= id\otimes \qtr_m\bigl(\psi_{n+m}(b)\bigr)= id\otimes \qtr_m\bigl(\OhtsFct(b)\bigr)\\
\mbox{\small by (\ref{eq-OhClTr})\quad} & =\OhtsFct(T)\,.\\
 \end{split} 
\end{equation}
\end{proof}

Note that Corollary~\ref{corr-OhtStrRel} not only implies that the maps $\STR^{n+m}_n(\extalg{*}\Burep n (b))$  
preserve the natural grading but also that these operators commute with the actions  of the conjugates
$\breve E_n=\Isom_n\widetilde E_n\Isom_n^{-1}$ and $\breve F_n=\Isom_n\widetilde F_n\Isom_n^{-1}$ of the operators in 
(\ref{eq-EFtens}). In fact, similar to Lemma~C.3 in \cite{OH02} we have 
\begin{equation}\label{eq-Fbreve}
 \breve F_n\alpha=t^{-\frac 12}\eigv_n\wedge \alpha\;
\end{equation}
for any $\alpha\in\extalg{*}M_n$ and $\eigv_n$ as in Corollary~\ref{corr-LTWeigen}. 
In order the find a respective expression for $\breve E_n$
we define
$\covol{j}:\extalg{k}M_n\to\extalg{k-1}M_n$ by 
$\covol{j}(\alpha)=0$ and  $\covol{j}(\alpha\wedge v_j)=\alpha$
where  $\alpha=v_{i_1}\wedge\ldots\wedge v_{i_s}$
 with $j\not\in\{i_1,\ldots,i_s\}$. A basic calculation with forms then yields
\begin{equation}\label{eq-Ebreve}
 \breve E_n\,=\, t^{-\frac n2}\sum_{j=1}^n\covol{j}\;.
\end{equation}

For a string link $T$ given as the 
closure of a braid $b$ we denote the restriction of the operator 
to degree $k$ forms as
\begin{equation}\label{eq-defBRT}
 \BRT n k (T,b)\,=\,\STR^{n+m}_n(\extalg{*}\Burep n (b))\Big|_{\extalg{k}M_n}\;.
\end{equation}
This corresponds, via conjugation by $\Isom_n$, to the restrictions defined in (\ref{eq-wdefLTW}) so that we have from
(\ref{eq-OhtStrRel}) that
 $\OFstr n k(T)= t^{\frac 12 (m+\swri(b))}\cdot\BRT n k (T,b)^{\Isom}\,$. Let us also define
 \begin{equation}
\BRT n {k/0} (T,b)=\frac 1{\BRT n 0 (T,b)}\BRT n k (T,b)\;.
\end{equation}
 Canceling the factors we thus find 
that 
\begin{equation}
 \OFstr n {k/0}(T)=\BRT n {k/0} (T,b)^{\Isom}\;
\end{equation}
where the morphism on the left is as  defined in (\ref{eq-def10quot}).
Consequently we have  reduced the proof of Theorem~\ref{thm-main} to showing that 
\begin{equation}\label{eq-redrel}
\extalg{k}\LTWstr n (T) =\BRT n {k/0} (T,b) \qquad \mbox{ for all \ \ } T\,, 
\end{equation}
which will be the objective of the next section.

\section{Proof of Main Results and Conclusions}\label{s-proof}

Before proving Theorem~\ref{thm-main} in Section~\ref{ss-proof} we provide several technical 
lemmas on exterior algebras that relate partial traces, actions on top forms 
and their dual contractions, as well as Schur complements. At the end of this section we 
comment on various implications of our result and possible generalizations. 

\subsection{Supertrace from Top Forms}\label{ss-strviatop} The aim of this section is to generalize the well known
relation $\,\sutr_m(\sextalg{*}f)=\det(\onem-f)\,$, where $f\in\mathrm{End}(M_m)$ is any endomorphism on
a  free module $M_m$ of rank $m$, to partial traces with respect to endomorphisms on $U\oplus M_m$
where $U$ is another module of rank $n$.

Let $M_m$ be generated by a basis $\{w_1,\ldots,w_m\}$ and denote by $\subs m$ the set of subsets
of $\{1,\ldots , m\}$. For any  $S=\{i_1,\ldots,i_k\}\in\subs m$ with $i_1<i_2<\ldots<i_k$ denote
$\volel S=w_{i_1}\wedge\ldots\wedge w_{i_k}\,$ so that $\{\volel{S}:\,S\in \subs m\}$ is a basis
of $\extalg{*}M_m\,$.  Denote also the top form $\volel{m}=\volel{\{1,\ldots,m\}}= w_1\wedge\ldots\wedge w_m$.  We define contractions with respective dual forms on the combined exterior algebra
as 
\begin{equation}\label{eq-contr}
\begin{split}
  \covol S:\extalg{*}(U\oplus M_m)\to \extalg{*}U \quad &\mbox{with \ }
 \covol S(\alpha\wedge \volel{T})\,=\,\delta_{S,T}\alpha\\
& \mbox{ for all \ } \alpha\in \extalg * U, \;\;S,T\in\subs m\;.
\end{split}
\end{equation}

For future application we also record here  the following elementary property, that is immediate from (\ref{eq-contr}).
\begin{equation}\label{eq-covolfact}
 \covol{S}(\gamma\wedge\delta)=\gamma\wedge\covol{S}(\delta) \qquad \mbox{ for any \ } \gamma\in \extalg * U \mbox{ \ and \ } \delta\in\extalg{*}(U\oplus M_m)\,.
\end{equation}

We use again abbreviated notation $\covol{m}=\covol{\{1,\ldots,m\}}$ for contractions with
the respective top form. 
Using contractions the partial supertrace for an endomorphism $f\in\mathrm{End}(\extalg *( U\oplus M_m))$ acting on an
element $\alpha\in\extalg{*}U$ may thus be reexpressed by the following formula.
\begin{equation}\label{eq-strme}
 \STR^{n+m}_n(f)\alpha \;=\;\sum_{S\in \subs{m}} (-1)^{|S|}\covol{S}\bigl( f(\alpha\wedge\volel{S})\bigr)\;.
\end{equation}

Writing $S^c$ for the complement of $S$
we have the relation
\begin{equation}\label{eq-volScompS}
 \volel m=\sigma_S\volel S\wedge\volel {S^c}
\end{equation}
where $\sigma_S\in\{\pm1\}$ is the signature of the respective shuffle permutation. 
More generally, we have for $\beta\in\extalg{*}U$ that $\beta\wedge\volel T\wedge\volel {S^c}$ is 
a non-zero multiple of $\beta\wedge \volel{m}$ only if $T=S$. This observation and (\ref{eq-volScompS})
thus imply the  relation 
\begin{equation}\label{eq-covolScompS}
 \covol{S}(\eta)\,=\,\sigma_S\covol{m}(\eta\wedge\volel{S^c})\;.
\end{equation}

 \begin{lemma}\label{lm-str=top}
Let $A\in\mathrm{End}(U\oplus M_m)$ and $\alpha\in\extalg{*}U\,$. Then
\begin{equation}\label{eq-str=top}
 \STR^{n+m}_n(\extalg{*}A)\alpha \,=\,\covol{m}\Bigl(  \bigl(\extalg{*}A\alpha\bigr)\wedge \bigl(\extalg{*}(\onem-A)\volel{m}\bigr)\Bigr)\,.
\end{equation}
 \end{lemma}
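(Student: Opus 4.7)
The plan is to expand both sides as sums indexed by subsets $S\in \subs{m}$ and match them term by term. Three ingredients enter: the formula (\ref{eq-strme}) for the partial supertrace, the multiplicativity $\extalg{*}A(\beta\wedge\gamma)=(\extalg{*}A\beta)\wedge(\extalg{*}A\gamma)$ of the exterior functor, and the conversion (\ref{eq-covolScompS}) between $\covol{S}$ and $\covol{m}$.

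First I would rewrite the left-hand side via (\ref{eq-strme}): multiplicativity turns the generic summand into $(-1)^{|S|}\covol{S}\bigl((\extalg{*}A\,\alpha)\wedge(\extalg{*}A\,\volel{S})\bigr)$, and applying (\ref{eq-covolScompS}) with $\eta = \extalg{*}A(\alpha\wedge\volel{S})$ rewrites each such term as
\begin{equation*}
(-1)^{|S|}\sigma_S\covol{m}\bigl((\extalg{*}A\,\alpha)\wedge(\extalg{*}A\,\volel{S})\wedge\volel{S^c}\bigr)\;.
\end{equation*}

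Second, I would expand the top form $\extalg{*}(\onem-A)\volel{m} = \bigwedge_{i=1}^m(w_i-Aw_i)$ by choosing, for each $S\in\subs{m}$, the factor $-Aw_i$ when $i\in S$ and $w_i$ otherwise. Reordering each term to place the $A$-images in the natural order of $S$ on the left and the remaining $w_j$ in the natural order of $S^c$ on the right introduces precisely the shuffle sign $\sigma_S$ from (\ref{eq-volScompS}), producing an expansion of the form $\sum_S(-1)^{|S|}\sigma_S(\extalg{*}A\,\volel{S})\wedge\volel{S^c}$. Wedging on the left with $\extalg{*}A\,\alpha$ and applying $\covol{m}$ then yields exactly the sum of terms produced in the previous step, establishing (\ref{eq-str=top}).

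The only delicate point---and the step I expect to be the main source of sign bookkeeping---is that the factor $\sigma_S$ appears twice: once when extracting the ordered form $(\extalg{*}A\,\volel{S})\wedge\volel{S^c}$ from the expansion of $\bigwedge_{i=1}^m(w_i - Aw_i)$, and once when switching between $\covol{m}$ and $\covol{S}$. The two instances multiply to $\sigma_S^2 = 1$, so the signatures disappear and only the signs $(-1)^{|S|}$ remain on both sides. Beyond tracking these shuffle permutations, the argument reduces to formal properties of the exterior functor and the definitions of $\STR^{n+m}_n$ and $\covol{S}$.
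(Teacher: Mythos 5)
Your proposal is correct and follows essentially the same route as the paper's proof: both rest on the subset expansion of $\extalg{*}(\onem-A)\volel{m}$ as $\sum_{S}\sigma_S(-1)^{|S|}(\extalg{|S|}A\volel{S})\wedge\volel{S^c}$, the multiplicativity of $\extalg{*}A$ to absorb $\alpha$ into $\alpha\wedge\volel{S}$, and the conversion (\ref{eq-covolScompS}) between $\covol{m}$ and $\covol{S}$, with the two shuffle signatures cancelling via $\sigma_S^2=1$. The only difference is organizational (you expand both sides and match terms, while the paper transforms the right-hand side directly into (\ref{eq-strme})), which is immaterial.
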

\begin{proof} We first compute the action of $(\onem-A)$ on the top form.
 \begin{equation}\label{eq-1mAcomp}
\begin{split}
  \extalg{*}(\onem-A)\volel{m}&=(\onem-A)w_1\wedge\ldots \wedge (\onem-A)w_m\\
&=\sum_{\epsilon_1,\ldots,\epsilon_n\in\{0,1\}}(-1)^{\sum_i\epsilon_i}A^{\epsilon_1}w_1\wedge\ldots \wedge A^{\epsilon_n}w_n\\
&=\sum_{S\in \subs m}(-1)^{|S|}A_S\tau_m\quad =\quad \sum_{S\in \subs m}\sigma_S(-1)^{|S|} A_S(\volel S\wedge\volel {S^c})\\
&= \sum_{S\in \subs m}\sigma_S(-1)^{|S|}(\extalg {|S|}A\volel S)\wedge\volel {S^c}
\end{split}
 \end{equation}
Here $A_S$ acts on $\volel{m}$ by the formula in the previous line with $\epsilon_j=1$ if $j\in S$ and $\epsilon_j=0$ otherwise.
Moreover, we are making use of (\ref{eq-volScompS}) in the third  line. We conclude the proof by further evaluation.
\begin{equation}
\begin{split}
\mbox{\small Right hand side of (\ref{eq-str=top})\quad}
&=
\sum_{S\in \subs m}\sigma_S(-1)^{|S|} \covol{m}\Bigl(  \bigl(\extalg{*}A\alpha\bigr)\wedge \bigl(\extalg {|S|}A\volel S)\wedge\volel {S^c}\Bigr)
\\
&=
\sum_{S\in \subs m}\sigma_S(-1)^{|S|} \covol{m}\Bigl(  \bigl(\extalg{*}A(\alpha \wedge  \volel S)\bigr)\wedge\volel {S^c}\Bigr)
\\
\mbox{\small by (\ref{eq-covolScompS})\quad}&=
\sum_{S\in \subs m} (-1)^{|S|} \covol{S}\bigl(\extalg{*}A(\alpha \wedge  \volel S) \bigr) 
\\
\mbox{\small by (\ref{eq-strme})\quad}&=
\STR^{n+m}_n\bigl(\extalg{*}A)\alpha \,.
\\
\end{split}
\end{equation}
\end{proof}

\subsection{Partial Traces from Schur Complements}\label{ss-trschur} Consider an endomorphism 
 $B\in \mathrm{End}(U\oplus M_m)$ and assume that for the respective block form  
\begin{equation}\label{eq-Bblockf}
 B=\left[\begin{array}{cc}
          H & J\\
        K & L
         \end{array}\right]  \qquad  L\in \mathrm{End}(M_m) \mbox{ \ is  invertible}.
\end{equation}
Then $B$ has a block-UL factorization  
\begin{equation}\label{eq-RLdec}
 B=B_uB_l
\qquad \mbox{with}\qquad 
B_u=\left[\begin{array}{cc}
          \onem & G\\
        0 & \onem
         \end{array}\right]
\mbox{\ and \ }
B_l= \left[\begin{array}{cc}
          D & 0\\
        K & L
         \end{array}\right] 
\end{equation}
where
\begin{equation}\label{eq-schurd}
 D=H-JL^{-1}K \qquad \mbox{and} \qquad G=JL^{-1}\,.
\end{equation}

The endomorphism $D$ is also called the {\em Schur complement} of $L$ in $B$. A basic relation 
between these endomorphisms is $\det(B)=\det(L)\det(D)$ which we generalize in the next lemma for our purposes.

\begin{lemma} \label{lm-extschurd}
Suppose $B$ has a block form as in (\ref{eq-Bblockf}), $L$ invertible, and $D\in\mathrm{End}(U)$ its Schur complement as in
(\ref{eq-schurd}). Then for $\,\alpha\in\extalg{*}U\,$ we have 
\begin{equation}\label{eq-extschurd}
 \covol{m}\bigl(\extalg{*}B(\alpha\wedge\volel{m})\bigr)\,=\,\det(L)\extalg{*}D\alpha\,.
\end{equation} 
\end{lemma}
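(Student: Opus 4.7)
The plan is to exploit the block UL factorization $B = B_u B_l$ from (\ref{eq-RLdec}), giving $\extalg{*}B = \extalg{*}B_u \circ \extalg{*}B_l$, and evaluate the two factors on $\alpha \wedge \volel{m}$ in sequence before finally contracting by $\covol{m}$. In each step the geometry of the block form forces only one term of a wedge-product expansion to survive, either by a degree argument in $\extalg{*}M_m$ or by the defining vanishing property of $\covol{m}$.

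First I would compute $\extalg{*}B_l(\alpha \wedge \volel{m})$. The endomorphism $B_l$ sends $u \in U$ to $Du + Ku \in U \oplus M_m$ and restricts on $M_m$ to $L$. Writing $\alpha = u_1 \wedge \ldots \wedge u_k$, multilinearity expands $\bigwedge_i (Du_i + Ku_i) \wedge \bigwedge_j Lw_j$ into $2^k$ terms; any term that picks a $Ku_i \in M_m$ contributes at least $m+1$ factors from $M_m$ and hence vanishes in $\extalg{m+1}M_m = 0$. Only the all-$Du_i$ term survives, giving
\begin{equation*}
\extalg{*}B_l(\alpha \wedge \volel{m}) = (\extalg{*}D\alpha) \wedge \extalg{m}L(\volel{m}) = \det(L)\,(\extalg{*}D\alpha) \wedge \volel{m}.
\end{equation*}

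Next I would apply $\extalg{*}B_u$. Since $B_u$ acts as the identity on $U$ and as $w \mapsto w + Gw$ on $M_m$, setting $\beta = \extalg{*}D\alpha \in \extalg{*}U$ yields
\begin{equation*}
\extalg{*}B_u\bigl(\beta \wedge \volel{m}\bigr) = \beta \wedge \bigwedge_{i=1}^{m}(w_i + Gw_i).
\end{equation*}
Finally, pulling $\beta$ out of $\covol{m}$ by (\ref{eq-covolfact}) and expanding the product $\bigwedge_i (w_i + Gw_i)$ as a sum indexed by the subset $S \subseteq \{1,\ldots,m\}$ on which $w_i$ (rather than $Gw_i$) is chosen, the $M_m$-degree of the summand is $|S|$. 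Only $S = \{1,\ldots,m\}$ gives maximal $M_m$-degree, and that term equals $\volel{m}$; every other summand is of the form $\gamma \wedge \volel{S}$ with $S \neq \{1,\ldots,m\}$ and is annihilated by $\covol{m}$. Since $\covol{m}(\volel{m}) = 1$, assembling the three steps yields $\covol{m}(\extalg{*}B(\alpha \wedge \volel{m})) = \det(L)\,\extalg{*}D\alpha$, as required.

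No step is really an obstacle; the mild care required is to confirm that the $K$-contributions in the $B_l$ step and the $G$-contributions in the $B_u$ step are eliminated by genuinely different mechanisms — degree in $\extalg{*}M_m$ for the former, the contraction $\covol{m}$ for the latter — and that the factorization property (\ref{eq-covolfact}) is what allows $\beta$ to be freely carried through $\covol{m}$.
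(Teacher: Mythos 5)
Your proposal is correct and follows essentially the same route as the paper: both use the UL factorization $B=B_uB_l$, kill the $K$-contributions in the $B_l$ step by a degree/rank argument in $\extalg{*}M_m$, and kill the $G$-contributions in the $B_u$ step via the vanishing property of $\covol{m}$. The only cosmetic difference is that you invoke (\ref{eq-covolfact}) explicitly to carry $\extalg{*}D\alpha$ through the contraction, whereas the paper absorbs this into the observation that the extra terms $\eta\wedge\psi_S\wedge\volel{S}$ with $|S|<m$ lie in the kernel of $\covol{m}$ by (\ref{eq-contr}).
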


\begin{proof} We note first that, since $B_l$ maps $M_m$ to itself and since its restriction to $M_m$ is $L$, we have
$\extalg{*}B_l\volel{m}=\extalg{*}L\volel{m}=\det(L)\volel{m}$. Furthermore, for $\alpha=x_1\wedge\ldots\wedge x_k$ for some 
$x_j\in U$ 
we have $\extalg{*}B_l\alpha =(Dx_1+Kx_1)\wedge\ldots\wedge (Dx_k+Kx_k) = \extalg{*}D\alpha+\rho$ where 
$\rho=\sum_i\gamma_i\wedge w_i$ since each $Kx_i\in M_m$ and is hence a combination of $w_i$'s. This form then implies
$
 \bigl(\extalg{*}B_l\alpha\bigr)\wedge\volel{m}=\bigl(\extalg{*}D\alpha\bigr)\wedge\volel{m}\,
$. Combining these formulas for $\extalg{*}B_l$ we thus obtain
\begin{equation}\label{eq-actBl}
\begin{split}
  \extalg{*}B_l(\alpha\wedge\volel{m})&=(\extalg{*}B_l\alpha)\wedge (\extalg{*}B_l\volel{m})
=\det(L)(\extalg{*}B_l\alpha)\wedge  \volel{m} \\
&= \det(L)(\extalg{*}D\alpha)\wedge\volel{m} \;.\\
\end{split}
\end{equation}
Continuing with the action of $B_u$ on forms, we note that  $\extalg{*}B_u\eta=\eta$ for any $\eta\in\extalg{*}U$ 
since  $B_u$ is identity on $U$. Furthermore we have that 
$\extalg{*}B_u\volel{m}=(w_1+Gw_1)\wedge\ldots\wedge  (w_n+Gw_n)=\volel{m}+\sum_{S\in\subs m: |S|<m}\psi_S\wedge \volel{S}\,$
where $\psi_S\in\extalg{*}U$ since all  $Gw_i\in U$.  As a result we have
$$
\extalg{*}B_u(\eta\wedge \volel{m})= \eta\wedge \extalg{*}B_u\volel{m}
= \eta\wedge \extalg{*}B_u\volel{m}=\eta\wedge \volel{m}+\rho'
$$
where $\rho'$ is the summation of terms $\eta\wedge\psi_S\wedge\volel{S}$ with $|S|<m$ and $\eta\wedge\psi_S\in \extalg{*}U$.
Since all of these terms are by (\ref{eq-contr}) in the kernel of $\covol{m}$ we find
\begin{equation}\label{eq-actBu}
 \covol{m}(\extalg{*}B_u(\eta\wedge \volel{m}))=\eta\;.
\end{equation}
Combining the actions in (\ref{eq-actBl}) and  (\ref{eq-actBu}) for $\eta=\det(L)\extalg{*}D\alpha$, we thus find
 \begin{equation}
  \begin{split}
   \covol{m}\bigl(\extalg{*}B_uB_l(\alpha\wedge\volel{m})\bigr)
&=\covol{m}\Bigl(\extalg{*}B_u\bigl(\extalg{*}B_l(\alpha\wedge\volel{m})\bigr)\Bigr)\\
&=\covol{m}\Bigl(\extalg{*}B_u\bigl(\det(L)(\extalg{*}D\alpha)\bigr)\wedge\volel{m}))\Bigr)\\
&=\det(L)(\extalg{*}D\alpha)\,\\
  \end{split} 
 \end{equation}
which is the desired form and thus completes the proof.
\end{proof}

The next lemma extends this result to expressions as those in Lemma~\ref{lm-str=top}.

\begin{lemma} \label{lm-extschurd2}
Let $B$, $L$, $D$, and $\alpha$ be as in Lemma~\ref{lm-extschurd} above. Then
\begin{equation} \label{eq-extschurd2}
 \covol{m}\Bigl(\bigl(\extalg{*}(\onem-B)\alpha\bigr)\wedge(\extalg{*}B\volel{m})\Bigr)\,=\,\det(L)\extalg{*}(\onem-D)\alpha\,.
\end{equation}
\end{lemma}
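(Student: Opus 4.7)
My plan is to reduce the lemma to the machinery already established for Lemma~\ref{lm-extschurd}, by routing $\onem-B$ through the block $B_u$ rather than applying $B$ directly. I would begin by computing $B_u^{-1}=\bigl[\begin{smallmatrix} \onem & -G \\ 0 & \onem \end{smallmatrix}\bigr]$ from (\ref{eq-RLdec}), which yields the factorization
\begin{equation*}
\onem-B\;=\;B_u\bigl(B_u^{-1}-B_l\bigr)\;=\;B_u\cdot E,\qquad \text{where }\;\;E\;=\;\left[\begin{array}{cc}\onem-D & -G\\ -K & \onem-L\end{array}\right].
\end{equation*}
The crucial feature is that $\onem-D$ appears as the top-left block of $E$. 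I would then use the multiplicativity of $\extalg{*}$ to write $\extalg{*}(\onem-B)\alpha=\extalg{*}B_u(\extalg{*}E\alpha)$, and, reusing the identity $\extalg{*}B\volel m=\det(L)\extalg{*}B_u\volel m$ (already established inside the proof of Lemma~\ref{lm-extschurd} from $B_l\volel m=\det(L)\volel m$), recast the left-hand side of (\ref{eq-extschurd2}) as
\begin{equation*}
\det(L)\cdot\covol m\Bigl(\,\extalg{*}B_u(\extalg{*}E\alpha)\wedge \extalg{*}B_u\volel m\,\Bigr).
\end{equation*}
The next ingredient would be the algebra-homomorphism property of $\extalg{*}B_u$ with respect to $\wedge$, which lets me merge the two $B_u$-factors inside $\covol m$ into $\extalg{*}B_u(\extalg{*}E\alpha\wedge\volel m)$.

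I would then analyze $\extalg{*}E\alpha\wedge\volel m$. Writing $\alpha=x_1\wedge\cdots\wedge x_k$ with $x_i\in U$, the block form of $E$ gives $Ex_i=(\onem-D)x_i-Kx_i$, so expanding
\begin{equation*}
\extalg{*}E\alpha\;=\;\bigwedge_i\bigl((\onem-D)x_i-Kx_i\bigr)
\end{equation*}
yields the ``pure'' term $\extalg{*}(\onem-D)\alpha$ together with correction terms each containing at least one factor $Kx_i\in M_m$. Since any such $Kx_i$ is a linear combination of the $w_j$'s and since $w_j\wedge\volel m=0$, all correction terms annihilate under $\wedge\volel m$, so $\extalg{*}E\alpha\wedge\volel m=\extalg{*}(\onem-D)\alpha\wedge\volel m$. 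I would then conclude by invoking the identity $\covol m(\extalg{*}B_u(\eta\wedge\volel m))=\eta$ for $\eta\in\extalg{*}U$, which is precisely (\ref{eq-actBu}) in the proof of Lemma~\ref{lm-extschurd}, applied with $\eta=\extalg{*}(\onem-D)\alpha$.

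The only non-mechanical obstacle I anticipate is recognizing the factorization $\onem-B=B_u E$ that returns the Schur complement to a diagonal block of $E$. Once that is in hand, every remaining step is either multiplicativity of $\extalg{*}$, the algebra-homomorphism property of $\extalg{*}B_u$, or the elementary vanishing argument in $\extalg{*}M_m$, all of which piggy-back on the identities already isolated in the proof of Lemma~\ref{lm-extschurd}. In effect, (\ref{eq-extschurd2}) is Lemma~\ref{lm-extschurd} viewed from the ``$\onem-\cdot$'' side of the picture, with $B_u$-conjugation absorbing the discrepancy between the top-left blocks of $B$ and $\onem-B$.
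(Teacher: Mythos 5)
Your proof is correct, but it takes a genuinely different route from the paper's. The paper proves the lemma by a binomial-type expansion of $\extalg{*}(\onem-B)\alpha$ over subsets $S$ of $\{1,\ldots,k\}$ (its equation (\ref{eq-Bfact})), wedging each term with $\extalg{*}B\volel{m}$, pulling the $\alpha_{S^c}$ factor out of $\covol{m}$ via (\ref{eq-covolfact}), invoking the \emph{statement} of Lemma~\ref{lm-extschurd} termwise to replace $\covol{m}\bigl(\extalg{*}B(\alpha_S\wedge\volel{m})\bigr)$ by $\det(L)\extalg{*}D\alpha_S$, and then re-summing by the reverse of the same binomial identity. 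You instead never use Lemma~\ref{lm-extschurd} as a black box: you exploit the factorization $\onem-B=B_u\bigl(B_u^{-1}-B_l\bigr)=B_uE$, whose top-left block of $E$ is exactly $\onem-D$, and then reuse only two identities isolated inside the earlier proof, namely $\extalg{*}B_l\volel{m}=\det(L)\volel{m}$ and (\ref{eq-actBu}), together with the algebra-homomorphism property of $\extalg{*}B_u$ and the vanishing of any wedge containing a factor from $M_m$ against $\volel{m}$. I checked the factorization ($B_u^{-1}-B_l$ does have $\onem-D$ in the top-left corner, and $B_u(B_u^{-1}-B_l)=\onem-B_uB_l=\onem-B$) and each subsequent step; all are sound. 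What your route buys is the complete elimination of the subset expansion and the shuffle-sign bookkeeping $\sigma'_S$, at the cost of having to spot the slightly non-obvious factorization; the paper's route is mechanically a corollary of Lemma~\ref{lm-extschurd} but requires carrying the signed sum through three displayed computations. Either proof would serve.
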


\begin{proof} The calculations for this proof are very similar to those of Lemma~\ref{lm-str=top}. We may assume 
$\alpha=x_1\wedge\ldots\wedge x_k$ for independent generators $x_i\in U$. As before denote
$\alpha_S=x_{i_1}\wedge\ldots\wedge x_{i_p}$ for $S=\{i_1,\ldots,i_p\}\in\subs k$ with $i_1<\ldots < i_p$ so that
\begin{equation}
 \alpha=\sigma'_S\alpha_{S^c}\wedge\alpha_{S}\;
\end{equation}
where $\sigma'_S$ is the  signature of the respective shuffle permutation. We thus obtain by a calculation analogous to that in
(\ref{eq-1mAcomp}) that
\begin{equation}\label{eq-Bfact}
 \extalg{*}(\onem-B)\alpha\,=\,\sum_{S\in\subs k}\sigma'_S(-1)^{|S|}\alpha_{S^c}\wedge\bigl(\extalg{*}B\alpha_{S}\bigr)\;.
\end{equation} 
Forming the wedge product of this expression with $\extalg{*}B\volel{m}$ we thus obtain  
$$
\bigl(\extalg{*}(\onem-B)\alpha\bigr)\wedge(\extalg{*}B\volel{m})\,=\,\sum_{S\in\subs k}\sigma'_S(-1)^{|S|}\alpha_{S^c}\wedge\bigl(\extalg{*}B(\alpha_{S}\wedge\volel{m})\bigr)\;.
$$
The remainder of the proof is a computation.
\begin{equation}
 \begin{split}
  \mbox{\small Left hand side of (\ref{eq-extschurd2})\quad}& = 
        \sum_{S\in\subs k}\sigma'_S(-1)^{|S|}\covol{m}\Bigl(\alpha_{S^c}\wedge\bigl(\extalg{*}B(\alpha_{S}\wedge\volel{m})\bigr)\Bigr)\\
   \mbox{by (\ref{eq-covolfact})\quad}& = 
        \sum_{S\in\subs k}\sigma'_S(-1)^{|S|} \alpha_{S^c}\wedge\covol{m}\bigl(\extalg{*}B(\alpha_{S}\wedge\volel{m})\bigr)\\
  \mbox{by Lemma~\ref{lm-extschurd}\quad}& = 
        \det(L)\sum_{S\in\subs k}\sigma'_S(-1)^{|S|} \alpha_{S^c}\wedge  \extalg{*}D \alpha_{S} \\
 \mbox{ \quad}& = 
         \det(L)\extalg{*}(\onem-D) \alpha\ \\
\end{split}
\end{equation}
where the last step is analogous to (\ref{eq-Bfact}).
\end{proof}

Substituting $B=\onem-A$ we combine this with Lemma~\ref{lm-str=top} to obtain the following.

\begin{corr}\label{corr-extform} 
 Suppose an endomorphism $A\in \mathrm{End}(U\oplus M_m)$ is of the form
 \begin{equation}\label{eq-Acond}
  A=\onem_{n+m} - \left[\begin{array}{cc}
          \onem_n & G\\
        0 & \onem_m
         \end{array}\right] \left[\begin{array}{cc}
          D & 0\\
        K & L
         \end{array}\right] 
 \end{equation}
with $L$ invertible. Then we have
\begin{equation}\label{eq-extform}
 \STR^{n+m}_n(\extalg{*}A) = \det(L)\extalg{*}(\onem-D)\;.
\end{equation}
\end{corr}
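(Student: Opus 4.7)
The plan is to chain the two main technical lemmas of Section~\ref{ss-trschur} (and the top-form identity of Section~\ref{ss-strviatop}) via the substitution $B = \onem - A$. Concretely, Lemma~\ref{lm-str=top} expresses the partial supertrace as
\begin{equation*}
\STR^{n+m}_n(\extalg{*}A)\alpha \,=\,\covol{m}\Bigl(\bigl(\extalg{*}A\alpha\bigr)\wedge \bigl(\extalg{*}(\onem-A)\volel{m}\bigr)\Bigr),
\end{equation*}
and if we put $B = \onem - A$ the right-hand side becomes
\begin{equation*}
\covol{m}\Bigl(\bigl(\extalg{*}(\onem-B)\alpha\bigr)\wedge \bigl(\extalg{*}B\volel{m}\bigr)\Bigr),
\end{equation*}
which is exactly the left-hand side of (\ref{eq-extschurd2}). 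So once I verify that $B$ satisfies the hypotheses of Lemma~\ref{lm-extschurd2} with Schur complement $D$, the corollary follows immediately.

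The main verification is therefore purely algebraic: compute
\begin{equation*}
B \,=\, B_u B_l \,=\, \left[\begin{array}{cc} \onem_n & G \\ 0 & \onem_m \end{array}\right]\left[\begin{array}{cc} D & 0 \\ K & L \end{array}\right] \,=\, \left[\begin{array}{cc} D + GK & GL \\ K & L \end{array}\right].
\end{equation*}
The lower right block is $L$, which is invertible by assumption (so the hypothesis of Lemma~\ref{lm-extschurd2} holds), and the Schur complement of $L$ in $B$ is
\begin{equation*}
(D+GK) - (GL)L^{-1}K \,=\, D + GK - GK \,=\, D,
\end{equation*}
matching the $D$ in the statement of the corollary. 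Note that this also ties the block-UL factorization (\ref{eq-RLdec}) of Section~\ref{ss-trschur} to the hypothesis (\ref{eq-Acond}): the factorization is built into the way $A$ is presented.

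Applying Lemma~\ref{lm-extschurd2} now yields
\begin{equation*}
\STR^{n+m}_n(\extalg{*}A)\alpha \,=\, \det(L)\,\extalg{*}(\onem-D)\alpha
\end{equation*}
for every $\alpha \in \extalg{*}U$, which is exactly (\ref{eq-extform}). There is no real obstacle here; the corollary is a routine packaging of the previous lemmas once the substitution $B = \onem - A$ is made and the Schur-complement identity $D = (D+GK) - GL \cdot L^{-1}K$ is noted. The only thing one should be careful about is keeping track of which factor $(B_u$ versus $B_l)$ acts trivially on $U$ and on $\volel{m}$ respectively, but this is already handled inside Lemma~\ref{lm-extschurd2}.
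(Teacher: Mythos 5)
Your proof is correct and follows exactly the route the paper intends: the paper derives this corollary in one line by substituting $B=\onem-A$ and combining Lemma~\ref{lm-str=top} with Lemma~\ref{lm-extschurd2}, which is precisely your argument, with the added (and welcome) explicit check that the Schur complement of $L$ in $B_uB_l$ is $D$.
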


\subsection{Proof of Main Results and Example}\label{ss-proof}
\begin{proof}[Proof of Theorem~\ref{thm-main}]  
As noted in Section~\ref{ss-TrRel} it suffices to prove the relation in (\ref{eq-redrel}). 
For a string link $T:\posar{n}\to\posar{n}$ presented by the closure braid $b\in B_{n+m}$
consider the Burau matrix $\Burep n (b)$ with block form as in (\ref{eq-bclmat}). 

In order to evaluate $\STR^{n+m}_n(\extalg{*}\Burep n (b))$ used in the definition of $\BRT{n}{k}(T,b)$
we apply Corollary~\ref{corr-extform} using $A=\Burep n (b)$ and $U=M_n$.  The condition in (\ref{eq-Acond})
then translates to the set of block conditions $Z=-K$, $Q=\onem-L$, $Y=-GL$, and $X=\onem-(D+GK)=\onem-H$.

We note that by Proposition~\ref{prop-blfrm} the block $L=\onem-Q$ is indeed invertible. Moreover,
$\onem-D=X+GK=X-GZ=X+YL^{-1}Z=X+Y(\onem-Q)^{-1}Z=\LTWstr n (T)$, also by  Proposition~\ref{prop-blfrm}.
This implies by (\ref{eq-extform}) that
\begin{equation}
 \STR^{n+m}_n(\extalg{*}\Burep n (b))=\det(\onem-Q)\extalg{*}\LTWstr n (T)\;.
\end{equation}
From (\ref{eq-defBRT}) we thus find by restriction to $k$-forms that
\begin{equation}
 \BRT n k (T,b)=\det(\onem-Q)\extalg{k}\LTWstr n (T)\;.
\end{equation}
so that in particular $\BRT n 0 (T,b)=\det(\onem-Q)$. Relation (\ref{eq-redrel}) now
follows immediately completing the proof of Theorem~\ref{thm-main}. 
\end{proof}

\begin{proof}[Proof of Corollary~\ref{corr-main}]  
The $k\times k$ minors of $\OFstr{n}{1}$ are just the matrix elements of $\extalg{k}\OFstr{n}{1}(T)$
which is by (\ref{eq-01spcase}), up to relabeling and rescaling of basis, the same as 
$\extalg{k}\bigl(\OFstr{n}{0}(T)\LTWstr{n}(T)\bigr)=\OFstr{n}{0}(T)^k\extalg{k}\LTWstr{n}(T)$.
At the same time $\OFstr{n}{0}(T)\extalg{k}\LTWstr{n}(T)$ is, again up to relabeling and 
rescaling of basis, the same as $\OFstr{n}{k}(T)$ by Theorem~\ref{thm-main}. Hence, up to
permutations and multiplications by units in $\Rh$, the matrix elements of $\extalg{k}\OFstr{n}{1}(T)$ 
are the same as those of  $\OFstr{n}{0}(T)^{k-1}\OFstr{n}{k}(T)$. Since $\OFstr{n}{k}(T)$ is a 
matrix over $\Rh$ this implies the assertion.
\end{proof}

As an example consider again the string link $S:\posar{2}\to\posar{2}$ from Figure~\ref{fig:ex_strlk}.
The Ohtsuki's tangle functor can be readily computed, for example, by using skein relations
$R-R^{-1}=(t^{-\frac 12}-t^{\frac 12})\onem$ and the fact that isolated components render a diagram zero.
Organized by graded components and using the basis $\{e_0\otimes e_1, e_1\otimes e_0\}$ for $\grV{2}{1}$ we
obtain
\begin{equation}\label{eq-VonS}
\begin{split} 
 \OhtsFct(S)&=\OFstr 2 0 (S) \oplus \OFstr 2 1 (S) \oplus \OFstr 2 2 (S) \\
& = (2-t) \,\oplus\,
 \left[\begin{array}{cc}
         3-t-\overline t & t^{-\frac 12}- t^{\frac 12}\\
         t^{-\frac 12}- t^{\frac 12} & 1\\
       \end{array}\right]
       \,\oplus \, (2-\overline t)\;.
\end{split}
\end{equation}
We find from  (\ref{eq-TWisom}) that $\Isom_2:\grV{2}{1}\to \extalg{1}M_2=M_2$ is given by
$\Isom_2(e_0\otimes e_1)=tv_2$ and $\Isom_2(e_1\otimes e_0)=t^{\frac 12}v_1$ so that in the 
basis $\{v_1,v_2\}$ for $M_2$ we obtain
\begin{equation}
 \Isom_2\OhtsFct(S)\Isom_2^{-1}= (2-t) \,\oplus\,
 \left[\begin{array}{cc}
         1 & \overline t -1 \\
       1 - t  & 3-t-\overline t\\
       \end{array}\right]
       \,\oplus \, (2-\overline t)\;.
\end{equation}
Given that $\OFstr{2}{0}(S)=(2-t)$ we immediately have that $\OFstr{2}{1/0}(S)$ is, up to basis change, 
the same as $\extalg{1}\LTWstr{2}(S)=\LTWstr{2}(S)$ as in (\ref{eq-LTWstr-exS}). Moreover, we readily compute
\begin{equation}
 \extalg{2}\LTWstr{2}(S)=\det(\LTWstr{2}(S))=\frac {2-\overline t}{2-t} = \OFstr{2}{2/0}(S)\;,
\end{equation}
thus verifying the statement of Theorem~\ref{thm-main} for all $k$  in this example.

\subsection{Concluding Comments and Outlook}\label{ss-comm}\

We begin with remarks on the probabilistic motivation initially given in \cite{Jo87} and expanded upon in
\cite{LTW98}. As noted earlier to have a true stochastic matrix  we need to confine ourselves to diagrams 
$T$ with only positive crossings and $t\in[0,1]$ in order to probabilities in $[0,1]$ (or all negative crossings and $t^{-1}\in(0,1]$).

Recall from the comments following Corollary~\ref{corr-LTWeigen} that 
the unique  equilibrium state  $\eqp_n=c\eigv_n$ of
$\LTWstr{n}(T)$ for non-separable $T$ is  fixed and thus contains no topological information about $T$. 
The size of the space of equilibrium states for general $T$, however, may be used to 
 provide a measure for of separability of a given string link and thus may be of interest for further study.

The existence of the fixed equilibrium state and the interpretation of $\LTWstr{n}(T)$ as a stochastic matrix
rely on the existence of left and right  eigenvectors with eigenvalue 1 as in Corollary~\ref{corr-LTWeigen}.
From the point of view of the tangle functor their existence is actually a basic consequence of the underlying 
representation theory of $U_{-1}(\mathfrak{sl}_2)$ as outline in the following. 

Particularly,  we have, by equivariance and accounting for degrees, for the operators defined in (\ref{eq-EFtens})
that 
$\widetilde E_n\OFstr{n}{k+1}(T)=\OFstr{n}{k}(T)\widetilde E_n$ and 
 $\widetilde F_n\OFstr{n}{k}(T)=\OFstr{n}{k+1}(T)\widetilde F_n$. 
Let now $\mathbf{1}=e_0^{\otimes n}$ be the generating vector of $\grV n 0$ and $\mathbf{1}^*$ the respective dual
vector in $\grV n 0^*$. The intertwining relations imply for $k=0$ that 
$\OFstr{n}{1}(T)(\widetilde F_n\mathbf{1})=\widetilde F_n\OFstr{n}{0}(T)\mathbf{1}=\OFstr{n}{0}(T)(\widetilde F_n\mathbf{1})$
so that $\OFstr{n}{1/0}(T)(\widetilde F_n\mathbf{1})=(\widetilde F_n\mathbf{1})$. From Theorem~\ref{thm-main} we thus
have that $\Isom_n\widetilde F_n\mathbf{1}\,$ is also an eigenvector with eigenvalue one for $\LTWstr{n}(T)$. 

Using  (\ref{eq-Fbreve}) we find that this is by $\Isom_n\widetilde F_n\mathbf{1}=\breve F_n\Isom_n\mathbf{1}=\breve F_n\Isom_n\mathbf{1}
=\breve F_n1=t^{-\frac 12}\eigv_n$ indeed proportional to the eigenvector found directly in Corollary~\ref{corr-LTWeigen}. 
A similar equivariance argument shows that a left eigenvector for $\LTWstr{n}(T)$ is given by 
$\mathbf{1}^*\widetilde E_n\Isom_n^{-1}=\mathbf{1}^*\Isom_n^{-1}\breve E_n= \breve E_n\big|_{M_n}=t^{-\frac n2}\eige_n\,$
by (\ref{eq-Ebreve}). We remark also that the {\em reduced } Burau representation can be viewed in this context as
the further restriction of $M_n$ to the ``highest'' weight space $\mathrm{ker}(\widetilde E_n)$. 

Another question not treated here but of possible further interest is that of duality relations between degree $k$ and 
$(n-k)$ representations. The existence of a duality principle is strongly suggested by the  obvious the symmetry 
in the example in (\ref{eq-VonS}) but also basic algebraic properties of the R-matrix used for $\OhtsFct$ and its 
relations to exterior algebras. From a topological point of view such a principle may be expected via Poincar\'e 
duality in local coefficient cohomologies of underlying configuration spaces.

Observe also that the multiplicative function $\OFstr{n}{0}:\str{n}\to\Rh$ allows us to define
the assignment 
\begin{equation}
 \mathcal X: \str{n}\to\mathbb Z^{0,+}\,:\;T\,\mapsto\,\mathrm{span}\bigl(\OFstr{n}{0}(T)\bigr).
\end{equation}
where the span of a Laurent polynomial is given by the difference of its highest and lowest
non-vanishing power in $t^{\frac 12}$. Clearly, this is  an additive function with  $\mathcal X(T\circ S)= 
\mathcal X(T)+ \mathcal X(S)$ and vanishes on braids, that is, the  invertible elements of $\str{n}$.

A further objects of study is thus the size pf the kernel of $\mathcal X$ beyond the braid group, as well as relations
of $\mathcal X(T)$ with the number simple loops in the random walk picture or other topological
properties of string links. 

We finally point out the generalization of Ohtsuki's functor indicated by the two-variable R-matrix
provided at the end of Section~4.5 in \cite{OH02}. This yields, in the same manner as before, a 
tangle functor from which we obtain, analogous to (\ref{eq-wdefLTW}),   a representation
\begin{equation}\label{eq-pStrFun}
\pOFstr n j :\; \pstr n \,\to\,\mathrm{End}\bigl(\mathbb{Z}[t_1^{\pm\frac 14},\ldots, t_n^{\pm\frac 14}]^{{n\choose j}}\bigr)
\end{equation}
on the monoid $\pstr{n}$ of {\em pure} string links. In the tangle functor picture the variables $t_j$ label
representations of  $U_{-1}(\mathfrak{sl}_2)$ and thus fit into the framework of TQFTs where the $t_j$
are interpreted as ``colors'' or ``charges''. Analogous to (\ref{eq-def10quot}) we can also define 
quotient representations $\pOFstr{n}{j/0}$. 
 
Extending the construction of the classical Gassner representation of the pure braid groups 
Kirk, Livingston, and Wang obtain in \cite{KLW01} a representation 
$$\gamma^{KLW}_n: \pstr{n}\to \mathrm{End}\bigl(\mathbb{Q}(t_1,\ldots, t_n)^{n}\bigr)\,.
$$
We thus conclude this article with a  conjecture that naturally extends Theorem~\ref{thm-main}
and for which we expect the proof  to follow a similar strategy.
\begin{conj}
 The representation $\gamma^{KLW}_n$ is equivalent to $\,\pOFstr{n}{1/0}$. 
\end{conj}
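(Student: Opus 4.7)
The plan is to mirror the proof architecture of Theorem~\ref{thm-main}, replacing each single-variable ingredient by its multi-variable (colored) analog. First, I would verify that $\gamma_n^{KLW}$ admits a random walk description analogous to $\LTWstr{n}$ but with the weight system refined so that each strand carries its own variable: for a pure string link $T$, each component has a well-defined color $i \in \{1,\ldots,n\}$, and a path traversing an overcrossing piece of the $i$-th colored strand at a positive crossing contributes weight $t_i$ (continuing direction) or $1-t_i$ (dropping), and the inverse rule at negative crossings. Since pure string links preserve colors, this colored weight system is well-defined on $\pstr{n}$. The path-sum formula (\ref{eq-LTWij-paths}) extended in this manner should recover $\gamma_n^{KLW}$; this is implicit in \cite{KLW01} and can be checked on pure braid generators and verified to be Reidemeister-invariant.

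Second, I would establish a multi-variable analog of Proposition~\ref{prop-blfrm}. For $T \in \pstr{n}$ realized as the closure of a pure braid $b \in B_{n+m}$, the matrix $\gamma_{n+m}^{KLW}(b)$ decomposes into blocks $X,Y,Z,Q$ as in (\ref{eq-bclmat}), with $Q$ specializing to a nilpotent $\overline Q$ at $t_1=\cdots=t_n=1$ by the pure version of Lemma~\ref{lm-Qnil}. Then Lemma~\ref{lm-Qest} applies verbatim in the Zariski open neighborhood of the diagonal $t_1=\cdots=t_n=1$, giving convergence of the geometric series and the formula
\begin{equation}
\gamma_n^{KLW}(T) \;=\; X + Y(\onem - Q)^{-1} Z
\end{equation}
as rational functions in the $t_i$.

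Third, I would import the tangle functor side. The two-variable R-matrix at the end of Section~4.5 of \cite{OH02} yields a tensor functor $\OhtsFct^{\mathrm{col}}$ on colored pure tangles valued in free $\mathbb Z[t_1^{\pm 1/4},\ldots,t_n^{\pm 1/4}]$-modules that preserves a grading induced by the same $\ndgop$ as in (\ref{eq-gradop}). One then establishes, by a direct computation generalizing Lemma~\ref{lm-xBRequiv} with the isomorphism $\Isom_n$ adapted so that $v_j$ carries the weight associated to $t_j$, that on pure braids $\psi^{\mathrm{col}}_n$ is equivalent, up to a diagonal rescaling and overall writhe-type factor, to the exterior algebra extension of the multi-variable Gassner matrices. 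The partial trace formula (\ref{eq-OhClTr}) and its supertrace translation Lemma~\ref{lm-traces} then extend, since the conjugation factors in (\ref{eq-htens}) are local to each strand and transform compatibly.

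Fourth, once the block formula and the trace/supertrace dictionary are in place, Corollary~\ref{corr-extform} applies with $U = M_n$, $A = \gamma^{KLW}_{n+m}(b)$, and the identifications $L = \onem - Q$, $\onem - D = \gamma^{KLW}_n(T)$ exactly as in the proof of Theorem~\ref{thm-main}. Restriction to the degree-$1$ component yields
\begin{equation}
\pOFstr{n}{1}(T) \;=\; \det(\onem - Q)\,\gamma_n^{KLW}(T)^{\Isom}
\quad\text{and}\quad
\pOFstr{n}{0}(T) \;=\; \det(\onem - Q),
\end{equation}
from which the conjecture follows. The main obstacle I expect is the first step: writing down and Reidemeister-verifying the colored random walk model so that it literally equals the KLW construction, together with the multi-variable convergence analysis needed to justify inversion of $(\onem-Q)$. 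In particular the block $Q$ now lives over a multi-variable polynomial ring, and one must show that the limit at the diagonal $t_i \to 1$ is nilpotent for every pure string link, not merely at the single specialization $t=1$ used in Lemma~\ref{lm-Qnil}. Once this colored analog of Section~\ref{s-rwi} is in place, the exterior-algebraic machinery of Sections~\ref{ss-strviatop} and \ref{ss-trschur} transports with essentially no change.
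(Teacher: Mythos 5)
The statement you are proving is left as an open conjecture in the paper --- the authors give no proof, remarking only that they ``expect the proof to follow a similar strategy'' to that of Theorem~\ref{thm-main}. Your proposal is exactly that strategy spelled out as a roadmap, so it is consistent with the authors' intent; but as written it is an outline of things ``one would verify'' rather than a proof, and several of its steps are genuinely open, not routine.

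The most serious gap is your first step. You posit a colored random walk model in which a path on an overcrossing strand of color $i$ picks up weight $t_i$ or $1-t_i$, and assert that this ``should recover'' $\gamma_n^{KLW}$ as ``implicit in \cite{KLW01}.'' That is the entire content of the conjecture on the topological side, and the local rule you wrote is itself only a guess: in the standard Gassner/colored-Burau conventions the local crossing matrix for a strand of color $i$ crossing over a strand of color $j$ involves the color of the \emph{other} strand (the analog of (\ref{eq-burauM}) has $t_j$, not $t_i$, in the drop weight), and getting this wrong breaks the identification with \cite{KLW01} outright. Relatedly, a pure string link is the closure of a braid $b\in B_{n+m}$ that is generally \emph{not} pure (cf.\ Lemma~\ref{lm-Qnil}, where $\spo{Q}$ has nontrivial nilpotent Jordan blocks precisely because $\pi_b$ permutes the closing strands), so the colored Gassner matrices must be defined on a colored braid groupoid, not just on $P_{n+m}$; your block decomposition of $\gamma^{KLW}_{n+m}(b)$ presupposes this extension. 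Finally, the analog of Lemma~\ref{lm-xBRequiv} --- that the braid representation coming from Ohtsuki's two-variable R-matrix is conjugate to the exterior algebra of the colored Gassner matrices via a suitably weighted $\Isom_n$ --- is a nontrivial computation that has not been carried out anywhere in the paper and cannot be cited. The parts that genuinely do transport with no change are the ones you identify last: Lemmas~\ref{lm-str=top}, \ref{lm-extschurd}, \ref{lm-extschurd2} and Corollary~\ref{corr-extform} are pure multilinear algebra over an arbitrary commutative ring and apply verbatim. (Minor point: the convergence neighborhood of $t_1=\cdots=t_n=1$ in the analog of Lemma~\ref{lm-Qest} is an analytic one, not Zariski open.) In short: correct architecture, matching the authors' announced plan, but the conjecture remains unproved until steps one through three are actually executed.
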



\end{document}